\documentclass[11pt]{amsart}
\pdfoutput=1

\usepackage{amssymb,mathtools}
\usepackage{microtype}
\usepackage{mathtools}
\usepackage{tikz-cd}

\usepackage{forest}
\forestset{
  Bseries/.style={
    for tree={
      grow'=90,
      minimum width=3pt,
      inner sep=0pt,
      s sep=2.5pt,
      circle,
      draw,
      semithick,
      edge=semithick,
      fill,
      if level=0{
        baseline
      }{}
    },
    delay={
      where content={o}{content={}, fill=none}{}
    },
    before computing xy={
      for tree={
        l=5pt,
      }
    }
  },
}
\newcommand\Bseries[1]{\Forest{Bseries #1}}

\usepackage{enumitem}

\usepackage[margin=1in, marginparwidth=.75in]{geometry}

\usepackage[numbers,longnamesfirst]{natbib}

\usepackage{hyperref}
\hypersetup{
  pdftitle={Functional equivariance and modified vector fields},
  pdfauthor={Ari Stern and Sanah Suri},
  pdfsubject={MSC 2020: 37M15},
  bookmarksopen=false
}

\usepackage[capitalize,nameinlink]{cleveref}

\makeatletter
\g@addto@macro\bfseries{\boldmath}
\makeatother

\theoremstyle{plain} 
\newtheorem{theorem}{Theorem}[section]
\newtheorem{lemma}[theorem]{Lemma}
\newtheorem{corollary}[theorem]{Corollary}
\newtheorem{proposition}[theorem]{Proposition}

\theoremstyle{definition}
\newtheorem{definition}[theorem]{Definition}
\newtheorem{example}[theorem]{Example}
\newtheorem{assumption}[theorem]{Assumption}

\theoremstyle{remark}
\newtheorem{remark}[theorem]{Remark}

\begin{document}

\title{Functional equivariance and modified vector fields}

\author{Ari Stern}
\author{Sanah Suri}
\address{Department of Mathematics and Statistics, Washington University in St.~Louis}
\email{stern@wustl.edu}
\email{s.sanah@wustl.edu}

\begin{abstract}
  This paper examines functional equivariance, recently introduced by
  McLachlan and Stern [Found.\ Comput.\ Math.\ (2022)], from the
  perspective of backward error analysis. We characterize the
  evolution of certain classes of observables (especially affine and
  quadratic) by structure-preserving numerical integrators in terms of
  their modified vector fields. Several results on invariant
  preservation and symplecticity of modified vector fields are thereby
  generalized to describe the numerical evolution of non-invariant
  observables.
\end{abstract}

\maketitle

\section{Introduction}

\emph{Functional equivariance}, recently introduced by
\citet{McSt2022}, is a structure-preserving property of numerical
integrators describing the evolution of certain observables. Given
$ \dot{y} = f (y) $, the chain rule implies that a $ C ^1 $ observable
$ z = F (y) $ evolves according to $ \dot{z} = F ^\prime (y) f (y)
$. A numerical integrator is said to be \emph{$F$-functionally
  equivariant} if, when the integrator is applied to
$ \dot{y} = f (y) $ for any $f$, the numerical evolution of
$ \bigl( y , F (y) \bigr) $ is identical to that obtained by
numerically integrating the augmented system
\begin{equation*}
  \dot{y} = f (y) , \qquad \dot{z} = F ^\prime (y) f (y) .
\end{equation*}
If $\Phi$ is a one-step numerical integrator, $ \Phi _f $ is its
application to the vector field $f$, and $ \Phi _g $ is its
application to the augmented vector field
$ g( y, z ) = \bigl( f (y) , F ^\prime (y) f (y) \bigr) $, then this
is the statement that the following diagram commutes:
\begin{equation*}
  \begin{tikzcd}
    y _0 \ar[r, |->, "\Phi _f"] \ar[d, |->, "{(\mathrm{id}, F)}
    "'] & y _1 \ar[d, |->, "{(\mathrm{id}, F)}
    "]\\
    ( y _0, z _0 ) \ar[r, |->, "\Phi _g"] & ( y _1, z _1 ) \rlap{ .}
  \end{tikzcd}
\end{equation*}
A well-studied special case is when $ F ^\prime (y) f (y) = 0 $ and
$ \Phi $ preserves the invariance of $ F (y) $ whenever $F$ is, e.g.,
affine or quadratic; see \citet*[Chapter~IV]{HaLuWa2006} for a survey
of such results. However, there are many important cases discussed in
\citep{McSt2022} where one may wish to preserve the evolution of
\emph{non-invariant} observables, e.g., local conservation laws in
numerical PDEs.

In this paper, we examine functional equivariance from the perspective
of \emph{modified vector fields}, which form the foundation for
backward error analysis of numerical integrators;
cf.~\citep[Chapter~IX]{HaLuWa2006} and references therein. From this
point of view, a numerical trajectory of $ \dot{y} = f (y) $ is
formally viewed as a solution to a modified equation
$ \dot{ \widetilde{ y } } = \widetilde{ f } ( \widetilde{ y } ) $, so
the numerical evolution of the observable
$ \widetilde{ z } = F ( \widetilde{ y } ) $ is given by
$ \dot{ \widetilde{ z } } = F ^\prime ( \widetilde{ y } ) \widetilde{
  f } ( \widetilde{ y } ) $. Therefore, $F$-functional equivariance
corresponds to the condition
\begin{equation}
  \label{e:fe_mvf}
  \widetilde{ g } \bigl(  \widetilde{ y } , F ( \widetilde{y}) \bigr) = \bigl( \widetilde{ f } ( \widetilde{ y } ) , F ^\prime ( \widetilde{ y } ) \widetilde{ f } ( \widetilde{ y } ) \bigr) ,
\end{equation}
where $ \widetilde{g} $ is the modified vector field of the augmented
vector field $g$ given previously. As we will see, this approach
generalizes several well-known results relating invariant-preserving
integrators to modified vector fields, corresponding to the special
case where $ F ^\prime (y) f (y) = 0 $ implies
$ F ^\prime ( \widetilde{ y } ) \widetilde{ f } ( \widetilde{ y } ) =
0 $.

The paper is organized as follows:
\begin{itemize}
\item \Cref{s:integrator_maps} develops a theory of functional
  equivariance for \emph{integrator maps}
  $ \phi \colon f \mapsto \widetilde{ f } $, which take vector fields
  to modified vector fields, as in \citet{MuVe2016} and
  \citet{McMoMuVe2016}. This section proves integrator-map versions of
  the key results in \citep[Section~2]{McSt2022}.

\item \Cref{s:mvf} considers modified vector fields of one-step
  integrators $ \Phi $. In this setting, $ \widetilde{ f } $ is a
  formal power series in the step size, and its finite truncations are
  integrator maps in the sense of the previous section. For
  $F$-functionally equivariant integrators, in the sense of
  \citep{McSt2022}, we show that the condition \eqref{e:fe_mvf} holds
  term-by-term. Considering truncations therefore links the results in
  \citep{McSt2022} to those in \cref{s:integrator_maps} of the present
  paper.

\item Finally, \cref{s:additive_partitioned} discusses the
  generalization of the results in the preceding sections to additive
  and partitioned integrator maps and integrators, including
  additive/partitioned Runge--Kutta methods and splitting/composition
  methods.
  
\end{itemize}

\subsection*{Acknowledgments}

This material is based upon work supported by the National Science
Foundation under Grant No.~DMS-2208551.

\section{Functional equivariance of integrator maps}
\label{s:integrator_maps}

\subsection{Integrator maps and affine equivariance}

Let $ f \in \mathfrak{X} (Y) $ be a smooth vector field on a Banach
space $Y$, and denote its time-$h$ flow by
$ \exp h f \colon Y \rightarrow Y $. A one-step numerical integrator
$\Phi$ approximates this flow by
$ \Phi _{ h f } \colon Y \rightarrow Y $. In backward error analysis,
one views this as the flow of a modified vector field
$ \widetilde{ f } $, for which
$ \Phi _{ h f } = \exp h \widetilde{ f } $. However,
$ \widetilde{ f } $ is typically a formal power series in $h$, which
must be interpreted as a (possibly divergent) asymptotic expansion
rather than a genuine vector field (\citet*[Chapter~IX]{HaLuWa2006}).

To sidestep these technicalities, at least until \cref{s:mvf}, we
begin by following \citet{MuVe2016} and \citet*{McMoMuVe2016} in first
considering \emph{integrator maps}, whose modified vector fields are
genuine vector fields.

\begin{definition}
  An \emph{integrator map} $\phi$ is a collection of smooth maps
  $ \phi _Y \colon \mathfrak{X} (Y) \rightarrow \mathfrak{X} (Y) $,
  for each Banach space $Y$.  For $ f \in \mathfrak{X} (Y) $, we will
  typically write $ \phi (f) $ to mean the same thing as
  $ \phi _Y (f) $, which we call the \emph{modified vector field} of
  $f$ with respect to $\phi$. When $\phi$ is fixed, we will often
  denote the modified vector field simply by $ \widetilde{ f } $.
\end{definition}

\begin{remark}
  We allow for infinite-dimensional Banach spaces, which are used in
  some of the PDE applications discussed in \citet{McSt2022}. This is
  in contrast with \citep{MuVe2016,McMoMuVe2016}, who consider only
  vector fields on $ \mathbb{R}^n $ for $ n \in \mathbb{N} $. We are
  interested primarily in the \emph{algebraic} properties of these
  methods, and we do not attempt to address the tricky
  \emph{analytical} issues that may arise when considering backward
  error analysis of integrators on arbitrary Banach spaces.

  Fox example, any $\phi$ defined by a finite B-series (e.g., a finite
  truncation of the B-series for the modified vector field of a
  Runge--Kutta method) gives an integrator map in the sense of the
  definition above. Note that infinite B-series may diverge for
  certain $f$, even on $\mathbb{R}^n$.
\end{remark}

The integrator maps considered in this section will also be
\emph{affine equivariant} in the sense of \citep{McMoMuVe2016}. By the
main result of that paper, this means that they are B-series
maps. However, we will usually not use the equivalent characterization
of these maps in terms of trees and elementary differentials, instead
relying primarily on the affine equivariance property in the results
to follow.

\begin{definition}
  Given a G\^ateaux differentiable map
  $ \chi \colon Y \rightarrow U $, a pair of vector fields
  $ f \in \mathfrak{X} (Y) $ and $ g \in \mathfrak{X} (U) $ is
  \emph{$\chi$-related} if
  $ \chi ^\prime (y) f (y) = g \bigl( \chi (y) \bigr) $ for all
  $ y \in Y $, and we write $ f \sim _\chi g $. In particular, if
  $ A \colon Y \rightarrow U $ is affine, then $ f \sim _A g $
  whenever $ A ^\prime \circ f = g \circ A $. An integrator map $\phi$
  is \emph{affine equivariant} if $ f \sim _A g $ implies
  $ \phi (f) \sim _A \phi (g) $ for all affine maps $A$ between Banach
  spaces.
\end{definition}

\subsection{Functional equivariance}

We next define functional equivariance for integrator maps.

\begin{definition}
  \label{d:fe}
  Given a G\^ateaux differentiable map $ F \colon Y \rightarrow Z $
  and $ f \in \mathfrak{X} (Y) $, define the \emph{augmented vector
    field} $ g \in \mathfrak{X} ( Y \times Z ) $ by
  $ g ( y , z ) = \bigl( f (y) , F ^\prime (y) f (y) \bigr) $. An
  integrator map $ \phi $ is \emph{$F$-functionally equivariant} if
  $ \phi ( f ) \sim _{ ( \mathrm{id}, F ) } \phi (g) $ for all $f$,
  which is precisely the condition \eqref{e:fe_mvf}. Given a class of
  maps $\mathcal{F}$, we say that $\phi$ is
  \emph{$\mathcal{F}$-functionally equivariant} if it is
  $F$-functionally equivariant for all $ F \in \mathcal{F} ( Y, Z ) $
  and all Banach spaces $Y$ and $Z$.
\end{definition}

Let us now restrict our attention to affine equivariant integrator
maps. We first show that affine equivariance allows us to characterize
$ \widetilde{g} ( \widetilde{y} , \widetilde{z} ) $ for all
$ \widetilde{z} \in Z $, not just
$ \widetilde{z} = F ( \widetilde{y} ) $. This gives a stronger notion
of what it means for an affine equivariant integrator map to be
$F$-functionally equivariant: \emph{If $g$ is the augmented vector
  field of $f$, then $\widetilde{g}$ is the augmented vector field of
  $ \widetilde{f} $.}

\begin{proposition}
  \label{p:gtilde_affine}
  If $\phi$ is affine equivariant, then
  $ \widetilde{g} ( \widetilde{y} , \widetilde{z} ) = \widetilde{g}
  \bigl( \widetilde{y} , F ( \widetilde{y} ) \bigr) $ for all
  $ ( \widetilde{y}, \widetilde{z} ) \in Y \times Z $. Consequently,
  the $F$-functional equivariance condition \eqref{e:fe_mvf} holds if
  and only if
  $ \widetilde{g} ( \widetilde{y} , \widetilde{z} ) = \bigl(
  \widetilde{f} ( \widetilde{y} ) , F ^\prime ( \widetilde{y} )
  \widetilde{f} ( \widetilde{y} ) \bigr) $.
\end{proposition}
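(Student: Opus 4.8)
The plan is to exploit affine equivariance by applying it to a carefully chosen affine map on the augmented space $Y \times Z$. The key observation is that the "shift in the $z$-coordinate" is an affine automorphism. Specifically, for any fixed $c \in Z$, consider the affine map $A_c \colon Y \times Z \to Y \times Z$ given by $A_c(y,z) = (y, z + c)$. I would first check that the augmented vector field $g$ is $A_c$-related to itself: since $g(y,z) = \bigl(f(y), F'(y)f(y)\bigr)$ does not depend on $z$ at all, we have $A_c'(y,z) g(y,z) = g(y,z) = g\bigl(A_c(y,z)\bigr)$, so $g \sim_{A_c} g$. Affine equivariance then yields $\widetilde{g} \sim_{A_c} \widetilde{g}$, i.e., $\widetilde{g}(y, z+c) = \widetilde{g}(y,z)$ for all $c$; hence $\widetilde{g}$ is independent of its second argument. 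In particular, for any $(\widetilde{y}, \widetilde{z})$ we may take $c = F(\widetilde{y}) - \widetilde{z}$ to conclude $\widetilde{g}(\widetilde{y}, \widetilde{z}) = \widetilde{g}\bigl(\widetilde{y}, F(\widetilde{y})\bigr)$, which is the first claim.

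For the "consequently" part, I would combine this with the definition of $F$-functional equivariance. By \cref{d:fe}, condition \eqref{e:fe_mvf} says precisely $\widetilde{g}\bigl(\widetilde{y}, F(\widetilde{y})\bigr) = \bigl(\widetilde{f}(\widetilde{y}), F'(\widetilde{y})\widetilde{f}(\widetilde{y})\bigr)$. Given the $z$-independence just established, the left-hand side of this equation equals $\widetilde{g}(\widetilde{y}, \widetilde{z})$ for arbitrary $\widetilde{z}$, so \eqref{e:fe_mvf} is equivalent to $\widetilde{g}(\widetilde{y}, \widetilde{z}) = \bigl(\widetilde{f}(\widetilde{y}), F'(\widetilde{y})\widetilde{f}(\widetilde{y})\bigr)$ for all $(\widetilde{y}, \widetilde{z})$. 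The forward direction is immediate since it holds in particular at $\widetilde{z} = F(\widetilde{y})$; the reverse follows by restricting to $\widetilde{z} = F(\widetilde{y})$ and reading off \eqref{e:fe_mvf}.

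There is one small point I would want to nail down: why does $\widetilde{f}$ appear on the right-hand side, i.e., why is the first component of $\widetilde{g}(\widetilde{y}, \widetilde{z})$ equal to $\widetilde{f}(\widetilde{y})$ and independent of the other data? This is again an affine equivariance argument: the projection $\pi_Y \colon Y \times Z \to Y$ is (linear, hence) affine, and $g \sim_{\pi_Y} f$ because $\pi_Y'(y,z) g(y,z) = f(y) = f\bigl(\pi_Y(y,z)\bigr)$. Therefore $\widetilde{g} \sim_{\pi_Y} \widetilde{f}$, which says exactly that the first component of $\widetilde{g}(\widetilde{y}, \widetilde{z})$ is $\widetilde{f}(\widetilde{y})$. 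This is worth stating explicitly, since it confirms that the object we called $\widetilde{g}$ really is "the augmented vector field of $\widetilde{f}$" once the $F$-functional equivariance condition holds.

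I do not anticipate a serious obstacle here; the entire argument is a matter of selecting the right affine maps ($A_c$ for the $z$-independence, $\pi_Y$ for the first-component identification) and turning the crank on the definition of affine equivariance. The one thing to be careful about is bookkeeping with the product structure of $\mathfrak{X}(Y \times Z)$ — making sure that "$\widetilde{g}$ is independent of $\widetilde{z}$" and "$\widetilde{g}$ has first component $\widetilde{f}$" are stated in a way that cleanly combines to give the displayed equivalence — but this is routine rather than delicate.
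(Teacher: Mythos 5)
Your proposal is correct and matches the paper's proof essentially verbatim: both use the shift map $A_c(y,z)=(y,z+c)$, the observation that $g$ is independent of $z$ so $g\sim_{A_c}g$, affine equivariance to transfer this to $\widetilde{g}$, and the choice $c=F(\widetilde{y})-\widetilde{z}$. Your additional remark about the projection $\pi_Y$ is a correct (and harmless) elaboration that the paper defers to the proof of \cref{t:fe_iff_invariant}.
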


\begin{proof}
  Consider the affine map $ A ( y, z ) = ( y , z + c ) $, where
  $ c \in Z $ is a constant. Since $g$ depends only on $y$, we have
  $ g ( y, z ) = g ( y , z + c ) $, i.e., $ g \sim _A g $. Affine
  equivariance therefore implies
  $ \widetilde{g} \sim _A \widetilde{g} $, i.e.,
  $ \widetilde{g} ( \widetilde{y} , \widetilde{z} ) = \widetilde{g} (
  \widetilde{y} , \widetilde{z} + c ) $. For any fixed
  $ ( \widetilde{y} , \widetilde{z} ) \in Y \times Z $, taking
  $ c = F ( \widetilde{y} ) - \widetilde{z} $ completes the proof.
\end{proof}

We next consider the case where $ \mathcal{F} ( Y , Z ) $ is the class
of affine maps $ Y \rightarrow Z $. Compare the following result with
\citep[Proposition~2.6]{McSt2022}.

\begin{proposition}
  \label{p:ae_implies_afe}
  Every affine equivariant integrator map is affine functionally
  equivariant.
\end{proposition}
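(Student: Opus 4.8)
The plan is to reduce $F$-functional equivariance directly to the affine equivariance hypothesis, by recognizing the augmented vector field $g$ as the $(\mathrm{id},F)$-related partner of $f$. The key observation is that for \emph{any} G\^ateaux differentiable $F \colon Y \to Z$, the map $\chi = (\mathrm{id}, F) \colon Y \to Y \times Z$ satisfies $f \sim _\chi g$: writing out $\chi$-relatedness,
\begin{equation*}
  \chi ^\prime (y) f (y) = \bigl( f (y) , F ^\prime (y) f (y) \bigr) = g \bigl( y , F (y) \bigr) = g \bigl( \chi (y) \bigr) ,
\end{equation*}
which is just the definition of the augmented vector field restated. So the entire content of the proposition is carried by the word \emph{affine}.

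When $F$ is affine, $(\mathrm{id}, F)$ is an affine map between Banach spaces ($Y \times Z$ carrying, say, the product norm). Applying the affine equivariance of $\phi$ to the relation $f \sim _{(\mathrm{id}, F)} g$ then yields $\phi (f) \sim _{(\mathrm{id}, F)} \phi (g)$, i.e., $\widetilde{f} \sim _{(\mathrm{id}, F)} \widetilde{g}$; unwinding this relation is precisely the condition \eqref{e:fe_mvf}, so by \cref{d:fe} the map $\phi$ is $F$-functionally equivariant. Since $F$, $Y$, and $Z$ were arbitrary, $\phi$ is affine functionally equivariant.

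There is essentially no obstacle: the argument is a one-line consequence of the definitions once one notices that $f$ and its augmented vector field are always $(\mathrm{id}, F)$-related, so affineness of $F$ is the only thing being used. For contrast, an alternative componentwise route would verify \eqref{e:fe_mvf} by using the projection $Y \times Z \to Y$ to identify the first component of $\widetilde{g}$ with $\widetilde{f}$, and a suitable affine map sending $g$ to the zero field to identify the second component with $F ^\prime \widetilde{f}$; but this is strictly more work and also needs $\phi (0) = 0$, so I would prefer the direct argument. Note that \cref{p:gtilde_affine} is not needed here, although it shows the conclusion in fact extends off the graph $\widetilde{z} = F ( \widetilde{y} )$.
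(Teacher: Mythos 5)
Your proof is correct and is essentially identical to the paper's: both observe that $f \sim_{(\mathrm{id},F)} g$ holds by the very definition of the augmented vector field, that $(\mathrm{id},F)$ is affine when $F$ is, and then apply affine equivariance once. The additional remarks about the componentwise alternative are accurate but not needed.
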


\begin{proof}
  If $ F \colon Y \rightarrow Z $ is affine, then so is
  $ ( \mathrm{id}, F ) \colon Y \rightarrow Y \times Z $. Since $f$
  and $g$ in \cref{d:fe} satisfy $ f \sim _{ (\mathrm{id}, F ) } g $,
  affine equivariance of $\phi$ implies
  $ \phi (f) \sim _{ (\mathrm{id}, F) } \phi (g) $.
\end{proof}

We now characterize functional equivariance with respect to more
general classes of maps $\mathcal{F}$, such as quadratic or
higher-degree polynomial maps. Since the integrator maps under
consideration are affine equivariant, we make the following natural set of assumptions on $\mathcal{F}$; cf.~\citep[Assumption~2.8]{McSt2022}.

\begin{assumption}
  \label{a:affine}
  The class of maps $\mathcal{F}$ satisfies the following:
  \begin{itemize}
  \item $ \mathcal{F} ( Y, Y) $ contains the identity map for all $Y$;
  \item $ \mathcal{F} ( Y, Z ) $ is a vector space for all $Y$ and $Z$;
  \item $ \mathcal{F} $ is invariant under composition with affine
    maps, in the following sense: If $ A \colon Y \rightarrow U $ and
    $ B \colon V \rightarrow Z $ are affine and
    $ F \in \mathcal{F} ( U, V ) $, then
    $ B \circ F \circ A \in \mathcal{F} ( Y, Z ) $.
  \end{itemize}
\end{assumption}

The main result of this section will relate functional equivariance to
the more well-studied notion of invariant preservation, which we now
recall.

\begin{definition}
  Given a G\^ateaux differentiable map $ F \colon Y \rightarrow Z $,
  an integrator map $\phi$ is \emph{$F$-invariant preserving} if
  $ F ^\prime f = 0 $ implies $ F ^\prime \phi (f) = 0 $ for all
  $f$. We say that $ \phi $ is \emph{$\mathcal{F}$-invariant
    preserving}, for a class of maps $\mathcal{F}$, if $\phi$ is
  $F$-invariant preserving for all $ F \in \mathcal{F} ( Y , Z ) $ and
  all Banach spaces $Y$ and $Z$.
\end{definition}

Although functional equivariance seems stronger than invariant
preservation, since it accounts for both invariant and non-invariant
observables $F$, the properties are in fact equivalent for affine
equivariant integrator maps. Compare the following result with
\citep[Theorem~2.9]{McSt2022}.

\begin{theorem}
  \label{t:fe_iff_invariant}
  Let $\mathcal{F}$ satisfy \cref{a:affine}. An affine equivariant
  integrator map $\phi$ is $\mathcal{F}$-invariant preserving if and
  only if it is $\mathcal{F}$-functionally equivariant.
\end{theorem}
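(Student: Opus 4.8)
The plan is to prove the two implications separately. The direction ``$\mathcal{F}$-functionally equivariant $\Rightarrow$ $\mathcal{F}$-invariant preserving'' is short and needs only affine equivariance together with \cref{p:gtilde_affine}; the converse is the substantial one, and it is where all three bullets of \cref{a:affine} are used. Throughout, I would fix $F\in\mathcal{F}(Y,Z)$ and write $g(y,z)=\bigl(f(y),F'(y)f(y)\bigr)$ for the augmented vector field, and I would freely invoke \cref{p:gtilde_affine}, which reduces $F$-functional equivariance to the pointwise identity $\widetilde{g}(\widetilde{y},\widetilde{z})=\bigl(\widetilde{f}(\widetilde{y}),F'(\widetilde{y})\widetilde{f}(\widetilde{y})\bigr)$.

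For the easy direction, assume $\phi$ is $\mathcal{F}$-functionally equivariant and suppose $F'f=0$, so that $g(y,z)=\bigl(f(y),0\bigr)$. I would observe that the linear inclusion $\iota\colon Y\to Y\times Z$, $\iota(y)=(y,0)$, satisfies $f\sim_\iota g$ — this is exactly the step that uses $F'f=0$, since $g(\iota(y))=\bigl(f(y),F'(y)f(y)\bigr)$ and $\iota'(y)f(y)=\bigl(f(y),0\bigr)$. Affine equivariance then gives $\widetilde{f}\sim_\iota\widetilde{g}$, i.e.\ $\widetilde{g}(\widetilde{y},0)=\bigl(\widetilde{f}(\widetilde{y}),0\bigr)$. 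On the other hand, $F$-functional equivariance and \cref{p:gtilde_affine} give $\widetilde{g}(\widetilde{y},\widetilde{z})=\bigl(\widetilde{f}(\widetilde{y}),F'(\widetilde{y})\widetilde{f}(\widetilde{y})\bigr)$ for every $\widetilde{z}$, in particular for $\widetilde{z}=0$. Comparing second components yields $F'(\widetilde{y})\widetilde{f}(\widetilde{y})=0$, which is $F$-invariant preservation.

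For the converse, assume $\phi$ is $\mathcal{F}$-invariant preserving and let $f\in\mathfrak{X}(Y)$ be arbitrary. The key move is to introduce the auxiliary map $\widehat{F}\colon Y\times Z\to Z$, $\widehat{F}(y,z)=z-F(y)$, and to verify (i) $\widehat{F}\in\mathcal{F}(Y\times Z,Z)$ and (ii) $\widehat{F}$ is an invariant of $g$. For (i), writing $\pi_Y,\pi_Z$ for the affine coordinate projections, the identity-map bullet and the affine-composition bullet of \cref{a:affine} place $\pi_Z=\mathrm{id}_Z\circ\pi_Z$ and $F\circ\pi_Y$ in $\mathcal{F}(Y\times Z,Z)$, and the vector-space bullet then places $\widehat{F}=\pi_Z-F\circ\pi_Y$ there as well. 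For (ii), a one-line computation gives $\widehat{F}'(y,z)\,g(y,z)=F'(y)f(y)-F'(y)f(y)=0$. Invariant preservation forces $\widehat{F}'\widetilde{g}=0$, which says precisely that the $Z$-component of $\widetilde{g}(\widetilde{y},\widetilde{z})$ equals $F'(\widetilde{y})$ applied to its $Y$-component; and $g\sim_{\pi_Y}f$ together with affine equivariance gives $\widetilde{g}\sim_{\pi_Y}\widetilde{f}$, identifying that $Y$-component as $\widetilde{f}(\widetilde{y})$. Hence $\widetilde{g}(\widetilde{y},\widetilde{z})=\bigl(\widetilde{f}(\widetilde{y}),F'(\widetilde{y})\widetilde{f}(\widetilde{y})\bigr)$, which by \cref{p:gtilde_affine} is $F$-functional equivariance.

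I expect the main obstacle to be finding the right auxiliary invariant in the converse direction: $\widehat{F}(y,z)=z-F(y)$ is the natural ``graph defect'' of $F$, but one must confirm it actually lies in the class $\mathcal{F}$, and this is exactly what the three closure conditions of \cref{a:affine} are engineered to guarantee (each is genuinely needed — e.g.\ without the identity-map bullet one cannot even produce $\pi_Z$ inside $\mathcal{F}$). Everything after $\widehat{F}$ is in hand — the relatedness arguments under $\iota$ and $\pi_Y$, and the final appeal to \cref{p:gtilde_affine} — is routine bookkeeping.
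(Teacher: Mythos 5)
Your proposal is correct and follows essentially the same route as the paper's: the substantial direction uses the same auxiliary invariant $z - F(y)$ (yours is the paper's $G$ up to sign) together with the affine projection onto $Y$ and \cref{p:gtilde_affine}, and your careful verification that this auxiliary map lies in $\mathcal{F}(Y\times Z,Z)$ is exactly what \cref{a:affine} is invoked for. The only cosmetic difference is in the easy direction, where you relate $f$ to $g$ via the inclusion $y\mapsto(y,0)$ instead of relating $g$ to $0$ via the projection onto $Z$, which conveniently spares you the small lemma $\widetilde{0}=0$ that the paper's version needs.
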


\begin{proof}
  $ ( \Rightarrow ) $ Suppose $\phi$ is $\mathcal{F}$-invariant
  preserving. If $ F \in \mathcal{F} ( Y, Z ) $, \cref{a:affine}
  implies that $ G( y, z ) = F (y) - z $ is in
  $ \mathcal{F} ( Y \times Z , Z ) $. Furthermore, $G$ is an invariant
  of the augmented vector field $g$, since
  $ G ^\prime ( y, z ) g (y, z ) = F ^\prime (y) f (y) - F ^\prime (y)
  f (y) = 0 $. Writing
  $ \widetilde{ g } = ( \widetilde{g} _Y , \widetilde{g} _Z ) $, the
  fact that $\phi$ is $\mathcal{F}$-invariant preserving implies
  \begin{equation}
    \label{e:Gprime}
    0 = G ^\prime ( \widetilde{y} , \widetilde{z} ) \widetilde{g} ( \widetilde{y}, \widetilde{z}) = F ^\prime (\widetilde{y}) \widetilde{g} _Y ( \widetilde{y}, \widetilde{z}) - \widetilde{g} _Z ( \widetilde{y} , \widetilde{z}) ,
  \end{equation}
  Now, letting $ A ( y, z ) = y $ be linear projection onto the $Y$
  component, $ g \sim _A f $ implies
  $ \widetilde{ g } \sim _A \widetilde{ f } $ by affine
  equivariance. This says that
  $ \widetilde{ g } _Y ( \widetilde{y} , \widetilde{z} ) = \widetilde{
    f } (\widetilde{y}) $, so we conclude from \eqref{e:Gprime} that
  $ \widetilde{ g } _Z ( \widetilde{ y } , \widetilde{z} ) = F ^\prime
  ( \widetilde{y} ) \widetilde{f} ( \widetilde{y}) $. Hence, $\phi$ is
  $\mathcal{F}$-functionally equivariant.

  $ ( \Leftarrow ) $ Conversely, suppose $\phi$ is
  $\mathcal{F}$-functionally equivariant. If
  $ F \in \mathcal{F} ( Y , Z ) $ is an invariant of
  $f \in \mathfrak{X} (Y) $, then the augmented vector field is
  $ g ( y, z ) = \bigl( f (y) , 0 \bigr) $, and
  $\mathcal{F}$-functional equivariance implies
  $ \widetilde{g} \bigl( \widetilde{y}, \widetilde{z} \bigr) = \bigl(
  \widetilde{f} (\widetilde{y}), F ^\prime (\widetilde{y})
  \widetilde{f} (\widetilde{y}) \bigr) $ by
  \cref{p:gtilde_affine}. However, taking the linear projection
  $ B ( y, z ) = z $ gives $ g \sim _B 0 $, so affine equivariance
  implies $ \widetilde{g} \sim _B \widetilde{0} = 0 $. (As in
  \citep[Lemma~6.1]{McMoMuVe2016}, one proves $ \widetilde{ 0 } = 0 $
  by considering the affine map from the trivial Banach space to any
  point of $Z$.) Thus,
  $ F ^\prime (\widetilde{y}) \widetilde{f} (\widetilde{y}) = 0 $, so
  $\phi$ is $\mathcal{F}$-invariant preserving.
\end{proof}

\begin{example}
  \label{e:b-series_fe}
  We illustrate functional equivariance for some simple B-series
  integrator maps, whose terms are elementary differentials
  corresponding to rooted trees.
  \begin{enumerate}[label=(\roman*)]
  \item The integrator map $ \Bseries{[]} (f) = f $ is the identity,
    so it is trivially seen to be $F$-functionally equivariant with
    respect to all maps $F$.\label{i:identity}

  \item Consider the integrator map
    $ \Bseries{[[]]} (f) = f ^\prime f $. Applying this to the
    augmented vector field gives
    \begin{align*}
      \Bseries{[[]]} (g) 
      &= \bigl(  f ^\prime f , F ^\prime f ^\prime f + F ^{ \prime \prime } ( f , f ) \bigr) \\
      &= \Bigl( \Bseries{[[]]} (f) , F ^\prime \Bseries{[[]]} (f) + F ^{ \prime \prime } \bigl( \Bseries{[]} (f) , \Bseries{[]} (f) \bigr)  \Bigr) .
    \end{align*}
    If $F$ is affine, then $ F ^{ \prime \prime } = 0 $, so the last
    term vanishes and $ \Bseries{[[]]} $ is $F$-functionally
    equivariant. However, if $ F ^{ \prime \prime } \neq 0 $, then
    this generally does not hold. Thus, $ \Bseries{[[]]} $ is affine
    functionally equivariant, as guaranteed by
    \cref{p:ae_implies_afe}, but not quadratic functionally
    equivariant.

  \item Consider the integrator map
    $ \Bseries{[[[]]]} (f) = f ^\prime f ^\prime f $, whose
    application to the augmented vector field is
    \begin{align*}
      \Bseries{[[[]]]} (g)
      &= \bigl( f ^\prime f ^\prime f , F ^\prime f ^\prime f ^\prime f + F ^{ \prime \prime } ( f ^\prime f, f ) \bigr) \\
      &= \Bigl( \Bseries{[[[]]]} (f) , F ^\prime \Bseries{[[[]]]}(f) + F ^{ \prime \prime } \bigl( \Bseries{[[]]} (f), \Bseries{[]}(f) \bigr) \Bigr) .
    \end{align*}
    As in the previous example, $ \Bseries{[[[]]]} $ is affine
    functionally equivariant, since $ F ^{ \prime \prime } = 0 $ for
    affine $F$, but not quadratic functionally equivariant. \label{i:I_example}

  \item Consider the integrator map
    $ \Bseries{[[][]]} (f) = f ^{ \prime \prime } ( f, f ) $, whose
    application to the augmented vector field is
    \begin{align*}
      \Bseries{[[][]]} (g)
      &= \bigl( f ^{ \prime \prime } ( f, f ) , F ^\prime f ^{ \prime \prime } ( f, f ) + 2 F ^{ \prime \prime } ( f ^\prime f , f ) + F ^{ \prime \prime \prime } ( f, f, f ) \bigr) \\
      &= \Bigl( \Bseries{[[][]]} (f) , F ^\prime \Bseries{[[][]]} (f) + 2 F ^{ \prime \prime } \bigl( \Bseries{[[]]} (f) , \Bseries{[]}(f) \bigr) + F ^{ \prime \prime \prime } \bigl( \Bseries{[]}(f), \Bseries{[]}(f), \Bseries{[]}(f) \bigr)  \Bigr) 
    \end{align*}
    As in the last two examples, $ \Bseries{[[][]]} $ is affine
    functionally equivariant, since the $ F ^{ \prime \prime } $ and
    $ F ^{ \prime \prime \prime } $ terms vanish, but not quadratic
    functionally equivariant.

  \item Finally, consider the integrator map
    $ \phi (f) = f ^\prime f ^\prime f - \tfrac{1}{2} f ^{ \prime
      \prime } ( f , f ) $, i.e.,
    $ \phi = \Bseries{[[[]]]} - \tfrac{1}{2} \Bseries{[[][]]}
    $. Combining the calculations in the previous two examples, we get
    \begin{align*}
      \phi (g)
      &= \Bigl( f ^\prime f ^\prime f - \tfrac{1}{2} f ^{ \prime \prime } ( f, f ) , F ^\prime \bigl( f ^\prime f ^\prime f - \tfrac{1}{2} f ^{ \prime \prime } ( f, f ) \bigr) - \tfrac{1}{2} F ^{ \prime \prime \prime } ( f, f, f ) \Bigr)\\
      &= \Bigl( \phi (f) , F ^\prime \phi (f) - \tfrac{1}{2} F ^{ \prime \prime \prime } \bigl( \Bseries{[]}(f), \Bseries{[]}(f), \Bseries{[]}(f) \bigr)  \Bigr) ,
    \end{align*}
    where subtraction causes the $ F ^{ \prime \prime } $ terms to
    cancel. Thus, $ \phi $ is quadratic functionally equivariant,
    since $ F ^{ \prime \prime \prime } = 0 $ for quadratic $F$, but
    not cubic functionally equivariant. Regarding the general
    impossibility of cubic functional equivariance for B-series
    methods (i.e., B-series other than the exact flow), see
    \citet[Corollary~2.10(c)]{McSt2022}, which uses results of
    \citet{ChMu2007} and \citet{IsQuTs2007} on cubic invariant
    preservation.\label{i:qfe_example}
  \end{enumerate}
  We remark that the modified vector field for the implicit midpoint
  method is given by a B-series
  \begin{equation*}
    \Bseries{[]} +  \Bigl( \frac{ 1 }{ 12 } \Bseries{[[[]]]} - \frac{ 1 }{ 24 } \Bseries{[[][]]} \Bigr) + \cdots .
  \end{equation*}
  From \ref{i:identity} and \ref{i:qfe_example}, we see that the terms
  at each order are quadratic functionally equivariant integrator
  maps, corresponding to the fact that the implicit midpoint method is
  quadratic functionally equivariant. This is an example of a more
  general link between functional equivariance of integrators and
  their modified vector fields, which will be explored in
  \cref{s:mvf}.
\end{example}

\subsection{Closure under differentiation and observables involving
  variations}

We are often interested in the evolution of observables of the
\emph{variational equation}
\begin{equation}
  \label{e:variational}
  \dot{y} = f (y) , \qquad \dot{ \eta } = f ^\prime (y) \eta ,
\end{equation}
where $ \eta \in Y $ is called a \emph{variation} of $y$. For example,
the canonical symplectic two-form is a quadratic observable depending
on two variations of $y$, and it is invariant whenever $f$ is a
canonical Hamiltonian vector field. In order to describe the numerical
evolution of this and other observables depending on variations, we
develop the notion of \emph{closure under differentiation} for affine
equivariant integrator maps. The idea of closure under differentiation
and its connection with symplecticity, particularly for Runge--Kutta
methods, was pioneered by \citet{BoSc1994}. We adapt the approach of
\citet[Section~2.3]{McSt2022} for affine equivariant integrators.

\begin{definition}
  \label{d:closed_under_differentiation}
  Given $ f \in \mathfrak{X} (Y) $, define
  $ \delta f \in \mathfrak{X} ( Y \times Y ) $ by
  $ \delta f ( y, \eta ) = \bigl( f (y) , f ^\prime (y) \eta \bigr) $,
  corresponding to the variational system \eqref{e:variational}. An
  integrator map $\phi$ is \emph{closed under differentiation} if
  $ \phi ( \delta f ) = \delta \phi (f) $ for all $f$.
\end{definition}

\begin{remark}
  The vector field $ \delta f $ is called the \emph{tangent lift} of
  $f$ by \citet{BoSc1994}; elsewhere, it is called the \emph{complete
    lift} of $f$, cf.~\citet{YaKo1966}.
\end{remark}

Compare the following result with \citep[Theorem~2.12]{McSt2022}.

\begin{theorem}
  \label{t:closed_under_differentiation}
  Affine equivariant integrator maps are closed under differentiation.
\end{theorem}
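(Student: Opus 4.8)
The plan is to introduce an auxiliary parameter $\epsilon$ and exploit the fact that $\delta f$ sits inside a smooth family of vector fields that is $\chi$-related to $f$ along affine maps, so that affine equivariance transfers the relation to $\phi(\delta f)$; the derivative appearing in $\delta\phi(f)$ then emerges in the limit $\epsilon\to 0$. Concretely, for $\epsilon\in\mathbb{R}$ define $\widehat f_\epsilon\in\mathfrak{X}(Y\times Y)$ by
\[
  \widehat f_\epsilon(y,\eta)=\Bigl(f(y),\ \int_0^1 f^\prime(y+s\epsilon\eta)\,\eta\,\mathrm{d}s\Bigr),
\]
which is a genuine smooth vector field for each $\epsilon$, depends smoothly on $\epsilon$ jointly with $(y,\eta)$, satisfies $\widehat f_0=\delta f$, and for $\epsilon\neq 0$ has second component $\epsilon^{-1}\bigl(f(y+\epsilon\eta)-f(y)\bigr)$.

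Next I would record two relatedness facts for the linear (hence affine) maps $\pi\colon Y\times Y\to Y$, $\pi(y,\eta)=y$, and $\mu_\epsilon\colon Y\times Y\to Y$, $\mu_\epsilon(y,\eta)=y+\epsilon\eta$. First, $\widehat f_\epsilon\sim_\pi f$ is immediate since the first component of $\widehat f_\epsilon$ is $f(y)$. Second, $\widehat f_\epsilon\sim_{\mu_\epsilon}f$: for $\epsilon\neq 0$,
\[
  \mu_\epsilon^\prime\,\widehat f_\epsilon(y,\eta)=f(y)+\bigl(f(y+\epsilon\eta)-f(y)\bigr)=f\bigl(\mu_\epsilon(y,\eta)\bigr),
\]
and the case $\epsilon=0$ is trivial. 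Applying affine equivariance of $\phi$ to these two relations gives $\phi(\widehat f_\epsilon)\sim_\pi\widetilde f$ and $\phi(\widehat f_\epsilon)\sim_{\mu_\epsilon}\widetilde f$ for every $\epsilon$. Writing $\phi(\widehat f_\epsilon)=(u_\epsilon,v_\epsilon)$ for its two $Y$-valued components, the first relation yields $u_\epsilon(y,\eta)=\widetilde f(y)$, and the second then yields $\widetilde f(y)+\epsilon\,v_\epsilon(y,\eta)=\widetilde f(y+\epsilon\eta)$.

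From this, for $\epsilon\neq 0$ we obtain $v_\epsilon(y,\eta)=\epsilon^{-1}\bigl(\widetilde f(y+\epsilon\eta)-\widetilde f(y)\bigr)$, whose limit as $\epsilon\to 0$ is $\widetilde f^\prime(y)\eta$. On the other hand, continuity of $\phi$ together with continuity of $\epsilon\mapsto\widehat f_\epsilon$ and of evaluation at $(y,\eta)$ gives $v_\epsilon(y,\eta)\to v_0(y,\eta)$, where $(u_0,v_0)=\phi(\widehat f_0)=\phi(\delta f)$. Combining, $\phi(\delta f)(y,\eta)=\bigl(\widetilde f(y),\widetilde f^\prime(y)\eta\bigr)=\delta\widetilde f(y,\eta)=\delta\phi(f)(y,\eta)$, as desired. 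The one delicate point is exactly this passage to the limit: the $\epsilon=0$ case of the identity $u_\epsilon(y,\eta)+\epsilon\,v_\epsilon(y,\eta)=\widetilde f(y+\epsilon\eta)$ is degenerate and by itself says nothing about $v_0$, so the argument genuinely relies on continuity of the integrator map along the curve $\widehat f_\epsilon$ — which is within the level of rigor adopted here, where $\phi$ is assumed smooth. Every other step is a direct application of affine equivariance, so I expect no further obstacle.
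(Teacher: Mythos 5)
Your proof is correct and follows essentially the same route as the paper's: both realize $\delta f$ as the $\epsilon\to 0$ limit of a difference-quotient family of vector fields that are affinely related to $f$ for each $\epsilon$, apply affine equivariance at each fixed $\epsilon$, and pass to the limit using the assumed smoothness of $\phi$. Your packaging is slightly cleaner---the paper routes through the product system $f\times f$ augmented by the observable $F(x,y)=(x-y)/\epsilon$ on $Y\times Y\times Y$, whereas your integral formula for $\widehat f_\epsilon$ makes the family manifestly smooth at $\epsilon=0$ with $\widehat f_0=\delta f$ exactly, so the continuity step you flag as delicate is stated at least as precisely as in the paper's own argument.
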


\begin{proof}
  Given $ f \in \mathfrak{X} (Y) $, consider the system
  \begin{equation*}
    \dot{x} = f (x) , \qquad \dot{y} = f (y) ,
  \end{equation*}
  corresponding to the vector field
  $ f \times f \in \mathfrak{X} ( Y \times Y ) $. Since
  $ f \times f \sim _A f$, where $A$ is either of the projections
  $ ( x , y ) \mapsto x $ or $ ( x, y ) \mapsto y $, affine
  equivariance of $\phi$ implies that
  $ \phi (f \times f ) \sim _A \phi (f) $ and thus
  $ \phi ( f \times f ) = \phi (f) \times \phi ( f ) $. Now, taking $ F ( x, y ) = ( x - y ) / \epsilon $ for $ \epsilon > 0 $ gives the augmented system
  \begin{equation}
    \label{e:fxf_augmented}
    \dot{x} = f (x) , \qquad \dot{y} = f (y) , \qquad \dot{z} = \frac{ f (x) - f (y) }{ \epsilon } .
  \end{equation}
  Since $F$ is affine, \cref{p:ae_implies_afe} says that applying
  $\phi$ to this augmented system gives
  \begin{equation}
    \label{e:fxf_augmented_modified}
    \dot{ \widetilde{x} } = \widetilde{f} ( \widetilde{x}) , \qquad \dot{ \widetilde{y} } = \widetilde{f} ( \widetilde{y} ) , \qquad \dot{ \widetilde{z} } = \frac{ \widetilde{f} ( \widetilde{x} ) - \widetilde{f} ( \widetilde{y} ) }{ \epsilon } ,
  \end{equation}
  which is the augmented system of
  $ \phi ( f \times f ) = \phi (f) \times \phi (f) $. Now, letting
  $ x = y + \epsilon \eta $ in \eqref{e:fxf_augmented} and taking
  $ \epsilon \rightarrow 0 $, the $z$-component converges to
  $ f ^\prime (y) \eta $. Similarly, letting
  $ \widetilde{x} = \widetilde{y} + \epsilon \widetilde{ \eta } $ in
  \eqref{e:fxf_augmented_modified} and taking
  $ \epsilon \rightarrow 0 $, the $\widetilde{ z } $-component
  converges to
  $ \widetilde{ f } ^\prime ( \widetilde{y} ) \widetilde{ \eta }
  $. Since $\phi$ is smooth and maps \eqref{e:fxf_augmented} to
  \eqref{e:fxf_augmented_modified} for all $\epsilon$, we conclude
  that $ \phi ( \delta f ) = \delta \phi (f) $, which completes the
  proof.
\end{proof}

We immediately obtain the following corollary for observables of the
variational equation; compare \citep[Corollary~2.13]{McSt2022}.

\begin{corollary}
  \label{c:variational_observables}
  Let $ f \in \mathfrak{X} (Y) $ and
  $ F \colon Y \times Y \rightarrow Z $, and suppose $\phi$ is affine
  equivariant and $F$-functionally equivariant.  If
  $ g \in \mathfrak{X} ( Y \times Y \times Z ) $ is the augmented
  vector field of $ \delta f $,
  \begin{equation*}
    g ( y, \eta , z ) = \Bigl( f (y) , f ^\prime (y) \eta , F ^\prime ( y, \eta ) \bigl( f (y) , f ^\prime (y) \eta \bigr) \Bigr) ,
  \end{equation*}
  then $ \widetilde{g} $ is the augmented vector field of
  $ \delta \widetilde{f} $,
  \begin{equation*}
    \widetilde{g} ( \widetilde{y}, \widetilde{\eta} , \widetilde{z} ) = \Bigl( \widetilde{f} ( \widetilde{y} ), \widetilde{f} ^\prime ( \widetilde{y} ) \widetilde{ \eta } , F ^\prime ( \widetilde{y} , \widetilde{\eta} ) \bigl( \widetilde{f} ( \widetilde{y} ) , \widetilde{f} ^\prime ( \widetilde{y} ) \widetilde{ \eta } \bigr)  \Bigr) .
  \end{equation*}
  That is,
  $ \phi \bigl( ( \delta f , F ^\prime \delta f ) \bigr) = \bigl(
  \delta \phi (f) , F ^\prime \delta \phi (f) \bigr) $.
\end{corollary}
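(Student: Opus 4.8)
The plan is to obtain this as a direct consequence of \cref{p:gtilde_affine} and \cref{t:closed_under_differentiation}. The first observation is purely a matter of unwinding definitions: the vector field $g$ displayed in the statement is exactly the augmented vector field, in the sense of \cref{d:fe}, of the vector field $\delta f \in \mathfrak{X}(Y \times Y)$ with respect to the observable $F \colon Y \times Y \to Z$ --- one simply reads $Y \times Y$ as the Banach space called ``$Y$'' in that definition. Nothing in \cref{d:fe} or \cref{p:gtilde_affine} requires the domain of the observable to be a single Banach space rather than a product, so both apply verbatim.

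Next I would invoke the hypotheses. Since $\phi$ is $F$-functionally equivariant and affine equivariant, \cref{p:gtilde_affine} (applied with $\delta f$ in place of $f$) gives
\[
  \widetilde{g}(\widetilde{y}, \widetilde{\eta}, \widetilde{z}) = \Bigl( \phi(\delta f)(\widetilde{y}, \widetilde{\eta}),\ F'(\widetilde{y}, \widetilde{\eta})\, \phi(\delta f)(\widetilde{y}, \widetilde{\eta}) \Bigr)
\]
for every $(\widetilde{y}, \widetilde{\eta}, \widetilde{z})$; in particular the right-hand side does not depend on $\widetilde{z}$, so $\widetilde{g}$ is the augmented vector field of $\phi(\delta f)$. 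Then \cref{t:closed_under_differentiation} identifies $\phi(\delta f) = \delta \phi(f) = \delta \widetilde{f}$, i.e.\ $\phi(\delta f)(\widetilde{y}, \widetilde{\eta}) = \bigl( \widetilde{f}(\widetilde{y}),\ \widetilde{f}'(\widetilde{y}) \widetilde{\eta} \bigr)$. Substituting this into the display above produces exactly the claimed formula for $\widetilde{g}$, and the final assertion $\phi\bigl( (\delta f, F' \delta f) \bigr) = \bigl( \delta\phi(f), F'\delta\phi(f) \bigr)$ is merely that formula rewritten with the augmented-vector-field notation.

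I do not anticipate any genuine difficulty: the corollary packages the two theorems together, and the only point demanding care is the bookkeeping of notation --- confirming that ``$\widetilde{f}$'' as it appears in \cref{p:gtilde_affine} becomes $\widetilde{\delta f} = \phi(\delta f)$ in the present application, and that the product structure of the domain causes no trouble in \cref{d:fe} or \cref{p:gtilde_affine}.
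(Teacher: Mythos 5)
Your proposal is correct and follows essentially the same route as the paper: the paper's proof is exactly the two-step computation $\phi\bigl((\delta f, F'\delta f)\bigr) = \bigl(\phi(\delta f), F'\phi(\delta f)\bigr) = \bigl(\delta\phi(f), F'\delta\phi(f)\bigr)$, with the first equality by $F$-functional equivariance (via \cref{p:gtilde_affine}, as you make explicit) and the second by \cref{t:closed_under_differentiation}. Your additional remarks on the bookkeeping --- reading $Y\times Y$ as the domain space in \cref{d:fe} and noting that the definition quantifies over all vector fields so it applies to $\delta f$ --- are accurate and merely spell out what the paper leaves implicit.
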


\begin{proof}
  We have
  $ \phi \bigl( ( \delta f , F ^\prime \delta f ) \bigr) = \bigl( \phi
  ( \delta f ) , F ^\prime \phi ( \delta f ) \bigr) = \bigl( \delta
  \phi (f), F ^\prime \delta \phi (f) \bigr) $, where the first
  equality holds by $F$-functional equivariance and the second
  equality holds by closure under differentiation.
\end{proof}

We can easily extend \cref{c:variational_observables} to observables
depending on two or more variations. For instance, if $\xi$ and $\eta$
are each variations of $y$, then
$ ( y , \xi, \eta ) \in Y \times Y \times Y $ satisfies
\begin{equation}
  \label{e:two_variations}
  \dot{y} = f (y) , \qquad \dot{ \xi } = f ^\prime (y) \xi, \qquad \dot{ \eta } = f ^\prime ( y ) \eta .
\end{equation}
This is $A$-related to $\delta f$, where $A$ is either of the
projections $ ( y , \xi, \eta ) \mapsto ( y , \xi ) $ or
$ ( y , \xi, \eta ) \mapsto ( y , \eta ) $, so affine equivariance
of $\phi$ implies that we have
\begin{equation}
  \label{e:two_variations_modified}
  \dot{ \widetilde{y} } = \widetilde{f} ( \widetilde{y} ) , \qquad \dot{ \widetilde{ \xi } } = \widetilde{f} ^\prime ( \widetilde{y} ) \widetilde{ \xi } , \qquad \dot{ \widetilde{ \eta } } = \widetilde{f} ^\prime ( \widetilde{y} ) \widetilde{ \eta } .
\end{equation}
If $\phi$ is $F$-functionally equivariant for some
$ F \colon Y \times Y \times Y \rightarrow Z $, we may then conclude
that it maps the augmented vector field of \eqref{e:two_variations} to
that of \eqref{e:two_variations_modified}.

A particularly important instance of this, which generalizes the
result that quadratic invariant preserving B-series are
symplectic, is worked out in the following example.

\begin{example}
  \label{e:symplectic}
  If $ F ( y, \xi, \eta ) = \omega ( \xi, \eta ) $, where
  $ \omega \colon Y \times Y \rightarrow Z $ is a continuous bilinear
  map, then we augment \eqref{e:two_variations} by the equation
  $ \dot{z} = ( L _f \omega ) _y ( \xi, \eta ) $. Here,
  $ ( L _f \omega ) _y $ is the Lie derivative of $\omega$ along $f$
  at $y$. Hence, the augmented vector field is
  \begin{equation*}
    g ( y, \xi , \eta , z ) = \bigl( f (y) , f ^\prime (y) \xi, f ^\prime (y) \eta , ( L _f \omega ) _y ( \xi, \eta ) \bigr) .
  \end{equation*}
  If $\phi$ is quadratic functionally equivariant, then it follows
  that $ \widetilde{g} $ is the augmented vector field of
  \eqref{e:two_variations_modified},
  \begin{equation*}
    \widetilde{g} ( \widetilde{y} , \widetilde{ \xi } , \widetilde{ \eta } , \widetilde{z} ) = \bigl( \widetilde{f} ( \widetilde{y} ) , \widetilde{f} ^\prime ( \widetilde{y} ) \widetilde{ \xi } , \widetilde{f} ^\prime ( \widetilde{y} ) \widetilde{ \eta } , ( L _{ \widetilde{f} } \omega ) _{ \widetilde{y} } ( \widetilde{ \xi }, \widetilde{ \eta } ) \bigr) .
  \end{equation*}
  In particular, if $ L _f \omega = 0 $, then
  \cref{t:fe_iff_invariant} implies that
  $ L _{ \widetilde{f} } \omega = 0 $ as well. As a special case, if
  $ ( Y , \omega ) $ is a symplectic vector space and $f$ is a
  symplectic vector field, then $ \widetilde{f} $ is also symplectic.
\end{example}

\subsection{Quadratic functionally equivariant B-series}
\label{s:qfe}

\citet{McMoMuVe2016} proved that affine equivariant integrator maps
are precisely those that can be represented by a B-series.  Let $T$
denote the set of rooted trees. As in \cref{e:b-series_fe}, we
identify each $ \tau \in T $ with the integrator map taking $f$ to its
corresponding elementary differential: $ \Bseries{[]} (f) = f $,
$ \Bseries{[[]]} (f) = f ^\prime f $,
$ \Bseries{[[[]]]} (f) = f ^\prime f ^\prime f $,
$ \Bseries{[[][]]} (f) = f ^{ \prime \prime } ( f, f ) $, etc. We can
thus express any affine equivariant integrator map as a B-series
\begin{equation*}
  \phi = \sum _{ \tau \in T } \frac{ b ( \tau ) }{ \sigma (\tau) } \tau ,
\end{equation*}
where $ \sigma (\tau) $ is the symmetry coefficient of $\tau$. This
section will assume that the reader is familiar with B-series, and we
refer to \citet{HaLuWa2006} and \citet{Butcher2021} for a
comprehensive treatment.

\citet[Theorem~IX.9.3]{HaLuWa2006} prove that the truncated modified
vector field of a B-series integrator is symplectic, and thus also
quadratic invariant preserving, if and only if
\begin{equation}
  \label{e:qfe_condition}
  b ( u \circ v ) + b ( v \circ u ) = 0 , \quad \forall u, v \in T ,
\end{equation} 
where $ \circ $ is the Butcher product on rooted
trees. \hyperref[i:qfe_example]{\cref*{e:b-series_fe}\ref*{i:qfe_example}},
which has $ b ( \Bseries{[[[]]]} ) = 1 $ and
$ b ( \Bseries{[[][]]} ) = - 1 $, can be seen to satisfy this
condition condition, since
$ \Bseries{[]} \circ \Bseries{[[]]} = \Bseries{[[[]]]} $ and
$ \Bseries{[[]]} \circ \Bseries{[]} = \Bseries{[[][]]} $. The proof
given in \citep[Theorem~IX.9.3]{HaLuWa2006} makes use of the
symplecticity criterion of \citet{CaSa1994} for B-series integrators
in terms of their coefficients $ a ( \tau ) $, along with a recursion
formula relating these to the $ b ( \tau ) $ coefficients of the
modified vector field. We remark that preservation of quadratic
invariants by Runge--Kutta methods was first characterized by
\citet{Cooper1987}.

Here, we give a new, direct proof of the criterion
\eqref{e:qfe_condition} for quadratic functional equivariance of
integrator maps. Our task is simplified substantially by the fact that
quadratic functional equivariance describes the evolution of
observables for \emph{arbitrary} vector fields $f$, not just
Hamiltonian vector fields or those preserving a particular invariant.

\begin{theorem}
  \label{t:qfe_b-series}
  A B-series integrator map is quadratic functionally equivariant if
  and only if its coefficients satisfy
  $ b ( u \circ v ) + b ( v \circ u ) = 0 $ for all $ u , v \in T $.
\end{theorem}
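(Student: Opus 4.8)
The plan is to reduce everything, via \cref{p:gtilde_affine} (applicable since a B‑series map is affine equivariant), to the single computation of the $Z$‑component of $\phi(g)$. Write $\phi = \sum_{\tau \in T} \frac{b(\tau)}{\sigma(\tau)}\tau$, fix a quadratic map $F \colon Y \to Z$, so that $F''$ is a constant symmetric bilinear map and $F^{(j)} = 0$ for $j \ge 3$, and let $g(y,z) = \bigl(f(y), h(y)\bigr)$ be the augmented vector field with $h(y) = F'(y)f(y)$. Because $g$ depends only on $y$, its elementary differentials split: the $Y$‑component of $\tau(g)$ is $\tau(f)$ (as in the proof of \cref{t:fe_iff_invariant}, using the projection onto $Y$), while if the root of $\tau$ has branches $\tau_1,\dots,\tau_m$, then the $Z$‑component of $\tau(g)$ is $h^{(m)}(y)\bigl(\tau_1(f)(y),\dots,\tau_m(f)(y)\bigr)$. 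Applying the product rule to $h = F'(\cdot)\,f(\cdot)$ and using that $F$ is quadratic,
\begin{equation*}
  h^{(m)}(y)(v_1,\dots,v_m) = F'(y)\,f^{(m)}(y)(v_1,\dots,v_m) + \sum_{i=1}^m F''\bigl(v_i,\, f^{(m-1)}(y)(v_1,\dots,\widehat{v_i},\dots,v_m)\bigr).
\end{equation*}
Substituting $v_j = \tau_j(f)(y)$ and noting that $f^{(m-1)}(y)\bigl(\tau_1(f),\dots,\widehat{\tau_i(f)},\dots,\tau_m(f)\bigr) = u_i(f)(y)$, where $u_i$ is the tree obtained from $\tau$ by deleting the root branch $\tau_i$ (so $\tau = u_i \circ \tau_i$), the $Z$‑component of $\tau(g)$ equals $F'(y)\,\tau(f)(y) + \sum_{i=1}^m F''\bigl(\tau_i(f)(y), u_i(f)(y)\bigr)$.

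Summing over $\tau$ with the coefficients $b(\tau)/\sigma(\tau)$, the leading terms assemble into $F'(y)\,\widetilde f(y)$, so the $Z$‑component of $\phi(g)$ is $F'(y)\widetilde f(y)$ plus a defect indexed by pairs (tree $\tau$ with $\lvert\tau\rvert\ge 2$, root branch $c$ of $\tau$). The crucial combinatorial point is that such pairs biject with ordered pairs $(u,v)\in T\times T$ via $(u,v)=(u_c,c)$, i.e.\ $\tau = u\circ v$, and under this bijection one has $\sigma(u\circ v) = m_v\,\sigma(u)\,\sigma(v)$, where $m_v$ is the number of root branches of $u\circ v$ isomorphic to $v$; grouping the root branches of $\tau$ by isomorphism type and tracking multiplicities turns the defect into $\sum_{(u,v)\in T\times T}\frac{b(u\circ v)}{\sigma(u)\sigma(v)}\,F''\bigl(v(f)(y),u(f)(y)\bigr)$. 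Using symmetry of $F''$ and averaging over the swap $u\leftrightarrow v$, this is $\sum_{(u,v)}\frac{b(u\circ v)+b(v\circ u)}{2\,\sigma(u)\sigma(v)}\,F''\bigl(u(f)(y),v(f)(y)\bigr)$. Thus, for every quadratic $F$ and every $f$, the $Z$‑component of $\phi(g)$ is $F'(y)\widetilde f(y) + \sum_{(u,v)}\frac{b(u\circ v)+b(v\circ u)}{2\sigma(u)\sigma(v)}F''\bigl(u(f)(y),v(f)(y)\bigr)$.

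Given this formula, both directions are immediate. If $b(u\circ v)+b(v\circ u)=0$ for all $u,v\in T$, the defect vanishes identically, so the $Z$‑component of $\widetilde g = \phi(g)$ is $F'(\widetilde y)\widetilde f(\widetilde y)$; together with \cref{p:gtilde_affine} this gives $F$‑functional equivariance for every quadratic $F$, hence quadratic functional equivariance. Conversely, if $\phi$ is quadratic functionally equivariant, \cref{p:gtilde_affine} forces the defect to vanish for all quadratic $F$ (equivalently, all constant symmetric bilinear $F''$) and all $f$. Comparing the B‑series order by order reduces this to a finite sum at each order; then the classical linear independence of elementary differentials finishes: after a linear change of coordinates one realizes the finitely many relevant $u(f)(y_0)$ as distinct basis vectors, and choosing $F''$ supported on the pair $\{u_0,v_0\}$ isolates the coefficient $b(u_0\circ v_0)+b(v_0\circ u_0)$, which must therefore be zero.

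I expect the main obstacle to be the symmetry‑coefficient bookkeeping in the reindexing step — verifying $\sigma(u\circ v)=m_v\,\sigma(u)\,\sigma(v)$ and correctly converting the sum over (tree, root branch) into the sum over ordered pairs of trees with the right weights. The rest is routine: the elementary‑differential expansion is a direct product‑rule computation special to quadratic $F$, and the appeal to independence of elementary differentials for the converse is standard.
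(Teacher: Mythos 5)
Your proof is correct and follows essentially the same route as the paper: the same Leibniz-rule expansion of the $Z$-component of $\tau(g)$ for quadratic $F$, the same identification $\sigma(u\circ v)=\mu\,\sigma(u)\sigma(v)$ to convert the sum over root branches into a sum over ordered pairs $(u,v)$, and the same symmetrization of $F''$. The only difference is in the necessity step, where you invoke the classical linear independence of elementary differentials while the paper gives a self-contained construction of $f$ and $F$ isolating a single pair $(u,v)$ in its appendix; both are valid.
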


\begin{proof}
  We write each rooted tree as
  $ \tau = [ \tau _1 , \ldots, \tau _m ] $, denoting that the root of
  $\tau$ has $m$ children, which are the roots of subtrees
  $ \tau _1 , \ldots, \tau _m $.  Let $ F \colon Y \rightarrow Z $ be
  quadratic, meaning that $ F ^{ \prime \prime \prime } = 0
  $. Applying $\tau$ to $ g = ( f, F ^\prime f ) $ and using the
  Leibniz rule to differentiate the product $ F ^\prime f $ gives
  \begin{equation*}
    \tau (g) = \biggl( \tau (f) , F ^\prime \tau (f) + \sum _{ i = 1 } ^m F ^{ \prime \prime } \bigl( [ \tau _1 , \ldots , \widehat{ \tau } _i , \ldots , \tau _m ] (f), \tau _i (f) \bigr) \biggr) ,
  \end{equation*}
  where $ \widehat{ \tau } _i $ denotes that the subtree $ \tau _i $
  is omitted.  Letting
  $ u = [ \tau _1 , \ldots, \widehat{ \tau } _i , \ldots, \tau _m ] $
  and $ v = \tau _i $, we see that each term in the sum above can be
  rewritten as $ F ^{ \prime \prime } \bigl( u (f) , v ( f) \bigr) $
  with $ u \circ v = \tau $. However, this term can appear more than
  once if $ v = \tau _i $ for multiple values of $i$. Recall that the
  symmetry coefficient $ \sigma (\tau) $ is defined recursively by
  \begin{equation*}
    \sigma ( \tau ) = \sigma ( \tau _1 ) \cdots \sigma ( \tau _m ) \mu _1 ! \cdots \mu _k ! , 
  \end{equation*}
  where $ \mu _1, \ldots , \mu _k $ count the number of occurrences of
  each unique tree in the list $ \tau _1 , \ldots, \tau _m $,
  $ k \leq m $. If $ \mu _j $ is the number of times $ \tau _i $
  appears in $\tau$, then
  \begin{equation*}
    \sigma (u) = \sigma ( \tau _1 ) \cdots \widehat{ \sigma ( \tau _i ) } \cdots \sigma ( \tau _m ) \mu _1 ! \cdots ( \mu _j -1 ) ! \cdots \mu _k !,
  \end{equation*}
  and since $ v = \tau _i $, it follows that
  \begin{equation*}
    \mu _j = \frac{ \sigma (u \circ v) }{ \sigma (u) \sigma (v) } .
  \end{equation*}
  This is precisely the number of times
  $ F ^{ \prime \prime } \bigl( u (f) , v ( f) \bigr) $ appears in the
  sum, which we now rewrite as
  \begin{equation*}
\sum _{ i = 1 } ^m F ^{ \prime \prime } \bigl( [ \tau _1 , \ldots , \widehat{ \tau } _i , \ldots , \tau _m ] (f), \tau _i (f) \bigr) = \sum _{ u \circ v = \tau } \frac{ \sigma ( u \circ v ) }{ \sigma (u) \sigma (v) } F ^{ \prime \prime } \bigl( u (f) , v (f) \bigr) .
  \end{equation*}
  Thus, summing the terms of the B-series and rewriting
  $ \sum _{ \tau \in T } \sum _{ u \circ v = \tau } $ as a sum over
  $ u, v \in T $, we get
  \begin{equation*}
    \phi (g) = \biggl( \phi (f) , F ^\prime \phi  (f) + \sum _{ u, v \in T } \frac{ b ( u \circ v ) }{ \sigma (u) \sigma (v) } F ^{ \prime \prime } \bigl( u (f) , v (f) \bigr) \biggr) .
  \end{equation*}
  Now, $\phi$ is quadratic functionally equivariant if and only if the
  extra terms in this sum cancel for all $f$ and $F$. We have
  $ F ^{ \prime \prime } \bigl( u (f) , v (f) \bigr) = F ^{ \prime
    \prime } \bigl( v (f) , u (f) \bigr) $, by symmetry of the
  Hessian, but no other relations among the terms in general. (See
  below for further discussion of this claim.)  Hence, these extra
  terms cancel generically if and only if
  $ b ( u \circ v ) + b ( v \circ u ) = 0 $ for all $u, v \in T $.
\end{proof}

The ``only if'' conclusion depends on the fact that, for all
$ u, v \in T $, one may construct $f$ and $F$ such that
$ F ^{ \prime \prime } \bigl( u (f) , v (f) \bigr) = F ^{ \prime
  \prime } \bigl( v (f) , u (f) \bigr) $ are the only nonvanishing
Hessian terms at some point. When $F$ corresponds to the canonical
symplectic form, \citet[Lemma~5.1]{CaSa1994} construct a Hamiltonian
vector field $f$ that has this property. (See also
\citet[Theorem~VI.7.4]{HaLuWa2006}, which gives a similar construction
that works for both B-series and P-series.) We give a self-contained
proof in \cref{s:necessity}, where the construction of $f$ and $F$ is
simplified by the fact that we are not constrained to the symplectic
setting.

\section{Modified vector fields of functionally equivariant integrators}
\label{s:mvf}

\subsection{Modified vector fields and relatedness}

We now consider the case where we are given a numerical integrator
$\Phi$ rather than an integrator map. As in
\citet[Chapter~IX]{HaLuWa2006}, we suppose that $ \Phi _{ h f } $ may
be expanded as a formal power series in $h$,
\begin{equation*}
  \Phi _{ h f } = \mathrm{id} + h f + h ^2 {d} _2 + h ^3 {d} _3 + \cdots .
\end{equation*}
We then seek a modified vector field $ \widetilde{f} $, also expressed
as a formal power series,
\begin{equation*}
  \widetilde{f} = f + h f _2 + h ^2 f _3 + \cdots ,
\end{equation*}
such that $ \Phi _{ h f } = \exp h \widetilde{f} $, in the sense that
the power series match term-by-term. Matching these terms yields a
recurrence for $ f _j $ in terms of the given $ {d} _j $
\citep[Equation~IX.1.4]{HaLuWa2006}.

We begin by extending the notion of $\chi$-relatedness to modified
vector fields, then prove a general result linking this to properties
of the integrator $\Phi$. This result will be instrumental in
characterizing the modified vector fields of affine equivariant and
functionally equivariant integrators.

\begin{definition}
  Let $ \widetilde{f} = f + h f _2 + h ^2 f _3 + \cdots $ and
  $ \widetilde{g} = g + h g _2 + h ^2 g _3 + \cdots $, where
  $ f , f _2, f _3 , \ldots \in \mathfrak{X} (Y) $ and
  $ g , g _2 , g _3 , \ldots \in \mathfrak{X} (U) $. Given
  $ \chi \colon Y \rightarrow U $, define
  $ \widetilde{f} \sim _\chi \widetilde{g} $ to mean that
  $ f \sim _\chi g $ and $ f _j \sim _\chi g _j $ for all
  $j = 2 , 3, \ldots ,$ i.e., $ \widetilde{f} $ and $ \widetilde{g} $
  are term-by-term $\chi$-related.
\end{definition}

\begin{theorem}
  \label{t:chi-related}
  Given an integrator $\Phi$, let $ \widetilde{f} $ and
  $ \widetilde{g} $ be the modified vector fields of $f$ and $g$,
  respectively. If
  $ \chi \circ \Phi _{ h f } = \Phi _{ h g } \circ \chi $ for all
  sufficiently small $h$, then
  $ \widetilde{f} \sim _\chi \widetilde{ g } $. Furthermore, if
  $ \chi \circ \Phi _{ h f } $ and $ \Phi _{ h g } \circ \chi $ are
  both real analytic in $h$ at $ h = 0 $, then the converse holds.
\end{theorem}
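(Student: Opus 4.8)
The plan is to separate an essentially \emph{formal} equivalence from the \emph{analytic} input. The ingredient I would isolate first is the classical correspondence between relatedness of vector fields and of their flows: if $ v \in \mathfrak{X}(Y) $ and $ w \in \mathfrak{X}(U) $ satisfy $ v \sim_\chi w $ — allowing the coefficients of $ v, w $ to depend polynomially on an auxiliary parameter $ h $ — then $ \chi \circ \exp(tv) = \exp(tw) \circ \chi $ as formal power series in the flow parameter $ t $. This is because $ u(t) := \chi \circ \exp(tv) $ and $ t \mapsto \exp(tw) \circ \chi $ both satisfy the formal initial value problem $ \dot{u} = w \circ u $, $ u(0) = \chi $ (the former because $ \chi' v = w \circ \chi $ at every point), and the solution of such an IVP is uniquely determined coefficient-by-coefficient in $ t $.

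For the converse direction, I would note that $ \widetilde{f} \sim_\chi \widetilde{g} $ (term-by-term) is, by linearity of the relatedness condition, the same as the single identity of formal $ h $-series $ \chi'(\widetilde{y}) \widetilde{f}(\widetilde{y}) = \widetilde{g}(\chi(\widetilde{y})) $, i.e.\ $ \widetilde{f} \sim_\chi \widetilde{g} $ as $ h $-parametrized vector fields. Applying the auxiliary fact with $ v = \widetilde{f} $, $ w = \widetilde{g} $ gives $ \chi \circ \exp(t\widetilde{f}) = \exp(t\widetilde{g}) \circ \chi $ as formal series in $ t $ and $ h $; specializing to $ t = h $ and using $ \exp(h\widetilde{f}) = \Phi_{hf} $ and $ \exp(h\widetilde{g}) = \Phi_{hg} $ yields $ \chi \circ \Phi_{hf} = \Phi_{hg} \circ \chi $ as an identity of formal $ h $-expansions. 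If moreover $ \chi \circ \Phi_{hf} $ and $ \Phi_{hg} \circ \chi $ are real analytic in $ h $ near $ 0 $, then two analytic functions with the same Taylor expansion agree, so the identity holds as actual maps for all sufficiently small $ h $.

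For the forward direction, equality of the maps $ \chi \circ \Phi_{hf} $ and $ \Phi_{hg} \circ \chi $ for small $ h $ forces their $ h $-expansions to agree, so it suffices to show that the formal identity $ \chi \circ \Phi_{hf} = \Phi_{hg} \circ \chi $ forces $ f_j \sim_\chi g_j $ for every $ j $. I would do this by induction on $ j $ with a truncation trick. Write $ \widetilde{f}^{\le N} := f + h f_2 + \cdots + h^{N-1} f_N $; since this agrees with $ \widetilde{f} $ through order $ h^{N-1} $, the series $ \exp(h\widetilde{f}^{\le N}) $ agrees with $ \Phi_{hf} $ through order $ h^N $, and $ \Phi_{hf}(y) - \exp(h\widetilde{f}^{\le N})(y) = h^{N+1} f_{N+1}(y) + O(h^{N+2}) $ (similarly on the $ U $-side). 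Assuming inductively that $ f_i \sim_\chi g_i $ for $ i \le N $ — equivalently $ \widetilde{f}^{\le N} \sim_\chi \widetilde{g}^{\le N} $ — the auxiliary fact (at $ t = h $) gives $ \chi \circ \exp(h\widetilde{f}^{\le N}) = \exp(h\widetilde{g}^{\le N}) \circ \chi $. Subtracting this from the hypothesis and extracting the coefficient of $ h^{N+1} $ — here only $ \chi' $ at $ y $ enters, since $ \Phi_{hf}(y) $ and $ \exp(h\widetilde{f}^{\le N})(y) $ differ by $ O(h^{N+1}) $ and both equal $ y + O(h) $ — leaves exactly $ \chi'(y) f_{N+1}(y) = g_{N+1}(\chi(y)) $, completing the step; the base case $ N = 0 $ is the coefficient of $ h^1 $, which gives $ f \sim_\chi g $ directly.

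The step I expect to be the main obstacle is this inductive argument: pinning down cleanly that $ \exp(h\widetilde{f}^{\le N}) $ reproduces $ \Phi_{hf} $ exactly through order $ h^N $ with next-order discrepancy precisely $ h^{N+1} f_{N+1} $, so that subtracting the two relatedness identities isolates $ f_{N+1} \sim_\chi g_{N+1} $ with no residual cross terms. This is a structured replacement for the messier route of expanding $ \chi \circ \Phi_{hf} $ directly and then repeatedly differentiating the lower-order relations $ f_i \sim_\chi g_i $ to cancel the $ \chi'', \chi''', \dots $ corrections that would otherwise appear. Analyticity is used in exactly one place — the final step of the converse — to pass from equality of formal $ h $-series to equality of maps; without it the modified vector field determines $ \Phi $ only up to a difference whose Taylor series vanishes.
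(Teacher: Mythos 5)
Your proposal is correct and follows essentially the same route as the paper's proof: the classical correspondence between $\chi$-relatedness of vector fields and commutation of their flows (applied to the truncations, which are genuine vector fields), an induction on the order $j$ in which linearizing $\chi$ isolates $\chi'(y) f_j(y) = g_j(\chi(y))$ from the coefficient of $h^j$, and analyticity used only to upgrade equality of formal $h$-series to equality of maps in the converse. The only difference is organizational --- you subtract the two truncated commutation identities and read off the top coefficient, whereas the paper expands $\chi\circ\Phi_{hf}$ and $\Phi_{hg}\circ\chi$ separately and cancels --- which does not change the substance of the argument.
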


\begin{proof}
  If $ \widetilde{f} $ and $ \widetilde{g} $ are actual vector fields,
  then
  $ \chi \circ \exp h \widetilde{f} = \exp h \widetilde{g} \circ \chi
  $ for all $h$ if and only if
  $ \widetilde{ f } \sim _\chi \widetilde{ g } $. This is a standard
  result on vector fields and flows,
  cf.~\citet[Proposition~4.2.4]{AbMaRa1988}. To extend this to the
  case where $ \widetilde{f} $ and $ \widetilde{g} $ are formal power
  series in $h$, we use an induction argument on the truncations,
  which are actual vector fields.

  First, observe that
  \begin{alignat*}{2}
    \chi \circ \Phi _{ h f } (y) &= \chi \circ \bigl[ \mathrm{id} + h f + \mathcal{O} ( h ^2 ) \bigr] (y) &&= \chi (y) + h \chi ^\prime (y)  f (y) + \mathcal{O} ( h ^2 ) ,\\
    \Phi _{ h g } \circ \chi (y) &= \bigl[ \mathrm{id} + h g + \mathcal{O} ( h ^2 ) \bigr] \circ \chi (y) &&= \chi (y) + h g \bigl( \chi (y) \bigr) + \mathcal{O} ( h ^2 ) ,
  \end{alignat*}
  where we have linearized $\chi$ about $y$ on the first line.
  Matching terms implies $ f \sim _\chi g $, which establishes the
  base case. For the induction step, suppose that
  $ f \sim _\chi g , \ldots , f _{ j -1 } \sim _\chi g _{ j -1 }
  $. Then
  \begin{align}
    \chi \circ \Phi _{ h f } (y)
    &= \chi \circ \bigl[ \exp ( h f + \cdots + h ^j f _j ) + \mathcal{O} ( h ^{ j + 1 } ) \bigr] (y) \notag \\
    &= \chi \circ \bigl[ \exp ( h f + \cdots + h ^{ j -1 } f _{ j -1 } ) + h ^j f _j + \mathcal{O} ( h ^{ j + 1 } ) \bigr] (y) \notag \\
    &= \chi \circ \exp ( h f + \cdots + h ^{ j -1 } f _{ j -1 } ) (y) + h ^j \chi ^\prime (y) f _j (y) + \mathcal{O} ( h ^{ j + 1 } ) . \label{e:ftilde}
  \end{align}
  Here, the second line uses the fact that all higher-order
  exponential terms involving $ h ^j f _j $ are
  $ \mathcal{O} ( h ^{ j + 1 } ) $, and the third line linearizes
  $\chi$ about
  $ \exp ( h f + \cdots + h ^{ j -1 } f _{ j -1 } ) (y) $. Similarly,
  \begin{align}
    \Phi _{ h g } \circ \chi (y)
    &= \bigl[ \exp ( h g + \cdots + h ^j g _j ) + \mathcal{O} ( h ^{ j + 1 } ) \bigr] \circ \chi (y) \notag \\
    &= \bigl[ \exp ( h g + \cdots + h ^{j-1} g _{j-1} ) + h ^j g _j + \mathcal{O} ( h ^{ j + 1 } ) \bigr] \circ \chi (y) \notag \\
    &= \exp ( h g + \cdots + h ^{j-1} g _{j-1} ) \circ \chi (y) + h ^j g _j \bigl( \chi (y) \bigr) + \mathcal{O} ( h ^{ j + 1 } ) . \label{e:gtilde}
  \end{align}
  By the inductive assumption,
  $ h f + \cdots + h ^{ j -1 } f _{ j -1 } \sim _\chi h g + \cdots + h
  ^{ j -1 } g _{ j -1 } $, i.e., the truncations are
  $\chi$-related. Therefore, applying
  \citep[Proposition~4.2.4]{AbMaRa1988}, we have
  \begin{equation*}
    \chi \circ \exp ( h f + \cdots + h ^{ j -1 } f _{ j -1 } ) = \exp ( h g + \cdots + h ^{j-1} g _{j-1} ) \circ \chi.
  \end{equation*}
  Hence, if \eqref{e:ftilde} and \eqref{e:gtilde} are equal, then we
  may cancel the first term of each to conclude
  $ f _j \sim _\chi g _j $.
  
  Conversely, the analyticity assumption allows us to conclude the
  equality of $ \chi \circ \Phi _{ h f } (y) $ and
  $ \Phi _{ h g } \circ \chi (y) $ for sufficiently small $h$ from the
  equality of their power series.
\end{proof}

\begin{remark}
  This can be seen as a generalization of Theorems~IX.5.1 and IX.5.2
  in \citet{HaLuWa2006}, which cover the case where $\chi$ is a
  parametrization of a manifold.
\end{remark}

\subsection{Affine equivariant integrators}

The terms $ {d} _j $ and $ f _j $ in the power series for
$ \Phi _{ h f } $ and $ \widetilde{f} $, respectively, depend on $f$
but not on $h$. Hence, they may be seen as arising from integrator
maps $ \delta _j (f) = {d} _j $ and $ \phi _j (f) = f _j $. Moreover,
each of these is homogeneous of degree $j$, meaning that
$ \delta _j ( h f ) = h ^j {d} _j $ and
$ \phi _j ( h f ) = h ^j f _j $.

We now discuss the relationship between affine equivariance of the
integrator $\Phi$ and that of the integrator maps $ \delta _j $ and
$ \phi _j $.

\begin{definition}
  An integrator $\Phi$ is \emph{affine equivariant} if $ f \sim _A g $
  implies $ A \circ \Phi _f = \Phi _g \circ A $ whenever $A$ is an
  affine map.
\end{definition}

\begin{corollary}
  \label{c:ae}
  If the integrator $\Phi$ is affine equivariant, then so are the
  integrator maps $\phi _j $. The converse is true if
  $ \Phi _{ h f } $ is real analytic in $h$ at $ h = 0 $ for all $f$.
\end{corollary}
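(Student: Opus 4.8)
The plan is to read this off directly from \cref{t:chi-related} applied with $\chi$ equal to an arbitrary affine map $A$. The only preliminary observation needed is that $A$-relatedness is preserved under rescaling the vector field: for affine $A$, the condition $f \sim_A g$ means $A^\prime \circ f = g \circ A$, whence $A^\prime \circ (h f) = h (A^\prime \circ f) = h (g \circ A) = (h g) \circ A$, so $f \sim_A g$ implies $h f \sim_A h g$ for every $h$.

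For the forward implication, suppose $\Phi$ is affine equivariant and let $A$ be affine with $f \sim_A g$. By the rescaling remark, $h f \sim_A h g$ for all sufficiently small $h$, so affine equivariance of $\Phi$ gives $A \circ \Phi_{h f} = \Phi_{h g} \circ A$ for all such $h$. Applying \cref{t:chi-related} with $\chi = A$ then yields $\widetilde f \sim_A \widetilde g$, which by definition of term-by-term relatedness says $f_j \sim_A g_j$ for every $j$; that is, $\phi_j (f) \sim_A \phi_j (g)$. Since $A$ and the $A$-related pair $f, g$ were arbitrary, each $\phi_j$ is affine equivariant.

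For the converse, assume every $\phi_j$ is affine equivariant and that $\Phi_{h f}$ is real analytic in $h$ at $h = 0$ for all $f$. Given affine $A$ with $f \sim_A g$, affine equivariance of the $\phi_j$ gives $f_j = \phi_j (f) \sim_A \phi_j (g) = g_j$ for all $j$, and $f \sim_A g$ holds for the leading term by hypothesis, so $\widetilde f \sim_A \widetilde g$. Composition with the affine map $A$ preserves real analyticity in $h$, so $A \circ \Phi_{h f}$ and $\Phi_{h g} \circ A$ are real analytic in $h$ at $h = 0$; the converse half of \cref{t:chi-related} then gives $A \circ \Phi_{h f} = \Phi_{h g} \circ A$ for all sufficiently small $h$, which is exactly affine equivariance of $\Phi$.

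This is essentially a bookkeeping translation between two formulations of the same property, so I do not expect a genuine obstacle. The one point that deserves a moment's care is the compatibility of the two formulations: affine equivariance of the integrator maps concerns the individual homogeneous pieces $\phi_j$, whereas \cref{t:chi-related} is phrased in terms of the whole formal series $\widetilde f$, but the definition of $\widetilde f \sim_A \widetilde g$ is precisely ``$f_j \sim_A g_j$ for all $j$'', so the two match by construction. A secondary minor point is the legitimacy of invoking affine equivariance of $\Phi$ at the rescaled fields $h f$ and $h g$, which is handled by the rescaling remark.
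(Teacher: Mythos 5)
Your proposal is correct and is exactly the paper's argument: the paper's proof is the single line ``Apply \cref{t:chi-related}, where $\chi = A$ is any affine map,'' and your write-up simply spells out the bookkeeping (the rescaling remark $f \sim_A g \Rightarrow hf \sim_A hg$, and the match between term-by-term $A$-relatedness of $\widetilde{f}, \widetilde{g}$ and affine equivariance of the $\phi_j$) that the paper leaves implicit.
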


\begin{proof}
  Apply \cref{t:chi-related}, where $ \chi = A $ is any affine map.
\end{proof}

An analogous result is true for the $ \delta _j $. One way to see this
would be to apply the main result of \citet{McMoMuVe2016} to conclude
that $ \widetilde{f} $ is a B-series in $f$, then use the fact that
the exponential of a B-series is also a B-series. However, the
following self-contained, tree-free proof uses only the basic
properties of affine equivariance.

\begin{proposition}
  \label{p:ae_delta}
  If the integrator $\Phi$ is affine equivariant, then so are the
  integrator maps $ \delta _j $. The converse is true if
  $ \Phi _{ h f } $ is real analytic in $h$ at $ h = 0 $ for all $f$.
\end{proposition}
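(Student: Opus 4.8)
The plan is to argue directly from the definitions, comparing the $h$-expansions of the two sides much as in \cref{t:chi-related}, except that here the relevant series is that of $\Phi_{hf}$ itself rather than of $\widetilde{f}$; this makes the argument simpler, since no exponential intervenes and hence no induction disentangling the coefficients is needed. Fix notation: write $ \Phi _{ h f } = \mathrm{id} + h f + h ^2 {d} _2 + h ^3 {d} _3 + \cdots $ with $ {d} _j = \delta _j (f) $, and for a second vector field $g$ write $ \Phi _{ h g } = \mathrm{id} + h g + h ^2 {e} _2 + \cdots $ with $ {e} _j = \delta _j (g) $; the $ {d} _j $ and $ {e} _j $ are genuine vector fields, whereas the expansions in $h$ are a priori only asymptotic. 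The key structural fact is that an affine map $A$ has constant derivative $ A ^\prime = L $, so composition with $A$ commutes with extracting Taylor coefficients in $h$.

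First I would prove the forward implication. Suppose $\Phi$ is affine equivariant and $ f \sim _A g $. Relatedness scales, so $ h f \sim _A h g $ for every scalar $h$, and the definition of affine equivariance gives $ A \circ \Phi _{ h f } = \Phi _{ h g } \circ A $ as genuine maps for all sufficiently small $h$. The left side has asymptotic expansion $ A (y) + \sum _{ j \geq 1 } h ^j A ^\prime {d} _j (y) $, using boundedness of $L$ to push it past the remainder, while the right side has expansion $ A (y) + \sum _{ j \geq 1 } h ^j {e} _j \bigl( A (y) \bigr) $. Equality of the maps forces equality of their asymptotic expansions, hence of coefficients: $ A ^\prime {d} _j = {e} _j \circ A $ for all $ j \geq 1 $. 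For $ j = 1 $ this merely restates $ f \sim _A g $, and for $ j \geq 2 $ it says $ \delta _j (f) \sim _A \delta _j (g) $, so each $ \delta _j $ is affine equivariant.

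For the converse, assume each $ \delta _j $ is affine equivariant and that $ h \mapsto \Phi _{ h f } $ is real analytic at $ h = 0 $ for every $f$, hence for $g$ as well. Given $ f \sim _A g $, affine equivariance of the $ \delta _j $, together with $ f \sim _A g $ for the first-order term, gives $ A ^\prime {d} _j = {e} _j \circ A $ for all $j$, so the power series $ A \circ \Phi _{ h f } (y) $ and $ \Phi _{ h g } \circ A (y) $ agree term-by-term. Analyticity makes each series converge to the corresponding map for small $h$, whence $ A \circ \Phi _{ h f } = \Phi _{ h g } \circ A $, i.e., $\Phi$ is affine equivariant.

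The one place needing care is the bookkeeping between maps and their $h$-expansions in the non-analytic case: one may not equate formal power series, but must instead observe that two maps agreeing for all small $h$ share the same asymptotic expansion, so their coefficient vector fields coincide. This is the analogue of the role played by \citet[Proposition~4.2.4]{AbMaRa1988} and the induction in \cref{t:chi-related}; here it is lighter, because the linearity of $A$ lets one read off the coefficients directly rather than disentangling them from an exponential.
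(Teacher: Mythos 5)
Your proof is correct and follows essentially the same route as the paper's: expand $A \circ \Phi_{hf}$ and $\Phi_{hg} \circ A$ in powers of $h$, use the affineness of $A$ (via $A \circ \Phi_{hf} - A = A' \circ (\Phi_{hf} - \mathrm{id})$) to push $A'$ onto each coefficient, and match terms, with analyticity handling the converse. The extra care you take about formal versus asymptotic expansions is a reasonable elaboration of the same argument, not a different one.
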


\begin{proof}
  Let $ f \sim _A g $ for some affine map $A$. Since $A$ is affine, we
  have
  $ A \circ \Phi _{ h f } - A = A ^\prime \circ ( \Phi _{ h f } -
  \mathrm{id} ) $, and therefore
  \begin{alignat*}{5}
    A \circ \Phi _{ h f } &= A &&+ h (A ^\prime \circ f) &&+ h ^2 (A ^\prime \circ {d} _2) &&+ h ^3 (A ^\prime \circ {d} _3) &&+ \cdots .\\
    \intertext{Next, writing $ \delta _j ( g ) = e _j $, we have}
    \Phi _{ h g } \circ A &= A &&+ h ( g \circ A ) &&+ h ^2 ( e _2 \circ A ) &&+ h ^3 ( e _3 \circ A ) &&+ \cdots .
  \end{alignat*}
  Thus, if $ A \circ \Phi _{ h f } = \Phi _{ h g } \circ A $, then the
  power series agree term-by-term, which means that
  $ {d} _j \sim _A e _j $. Conversely, assuming analyticity, equality
  of the power series implies equality of the maps.
\end{proof}

\begin{remark}
  The forward direction is essentially a version of the ``transfer
  argument'' in \citep[Proposition~6.2]{McMoMuVe2016}. Since $ \Phi $
  is an affine equivariant \emph{integrator}, $ \Phi - \mathrm{id} $
  is an affine equivariant \emph{integrator map}. Thus, the terms
  $ \delta _j $ in the Taylor series of $ \Phi - \mathrm{id} $ at $0$
  are also affine equivariant integrator maps.
\end{remark}

\subsection{Functionally equivariant integrators}

We now consider the relationship between functional equivariance of
integrators, in the sense of \citet{McSt2022}, and that of the
integrator maps $ \phi _j $ constituting the terms of the modified
vector field. We first recall the definition from \citep{McSt2022}
stated in the introduction.

\begin{definition}
  Given a G\^ateaux differentiable map $ F \colon Y \rightarrow Z $, a
  numerical integrator $\Phi$ is \emph{$F$-functionally equivariant}
  if
  $ ( \mathrm{id}, F ) \circ \Phi _f = \Phi _g \circ ( \mathrm{id}, F
  ) $ for all $ f \in \mathfrak{X} (Y) $, where
  $ g \in \mathfrak{X} (Y \times Z) $ is the augmented vector field of
  $f$. Given a class of maps $\mathcal{F}$, the integrator is
  \emph{$\mathcal{F}$-functionally equivariant} if this holds for all
  $ F \in \mathcal{F} ( Y, Z ) $ and all Banach spaces $Y$ and $Z$.
\end{definition}

\begin{corollary}
  \label{c:fe}
  If the integrator $\Phi$ is $F$-functionally equivariant, then so
  are the integrator maps~$ \phi _j $. The converse is true if $F$ is
  real analytic and if $ \Phi _{ h f } $ is real analytic in $h$ at
  $ h = 0 $ for all $f$.
\end{corollary}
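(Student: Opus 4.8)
The plan is to apply \cref{t:chi-related} with $ \chi = ( \mathrm{id}, F ) \colon Y \rightarrow Y \times Z $. By the definition of $F$-functional equivariance of the integrator, $ ( \mathrm{id}, F ) \circ \Phi _f = \Phi _g \circ ( \mathrm{id}, F ) $, where $g$ is the augmented vector field of $f$. This is exactly the hypothesis of the forward direction of \cref{t:chi-related}, so we immediately conclude $ \widetilde{f} \sim _{ ( \mathrm{id}, F ) } \widetilde{g} $, i.e., $ f _j \sim _{ ( \mathrm{id}, F ) } g _j $ for all $j$. Writing $ \phi _j (f) = f _j $ and $ \phi _j (g) = g _j $, this says $ \phi _j (f) \sim _{ ( \mathrm{id}, F ) } \phi _j (g) $; since $g$ is the augmented vector field of $f$, this is precisely the statement that $\phi _j$ is $F$-functionally equivariant.

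The one subtlety is that $F$ is merely G\^ateaux differentiable, not necessarily affine, so I should note that \cref{t:chi-related} does not require $\chi$ to be affine — it applies to any G\^ateaux differentiable $\chi$, which $ ( \mathrm{id}, F ) $ certainly is. Also, to even speak of the integrator map $\phi_j$ associated with the augmented system, I should observe that $g$ depends on $f$ in the expected way, and that the terms of the modified vector field $\widetilde{g}$ are $ g_j = \phi_j(g) $ by the same reasoning as in the discussion preceding \cref{c:ae}; this is just the statement that $\phi_j$ is a well-defined integrator map applied to $g$.

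For the converse, suppose each $\phi_j$ is $F$-functionally equivariant, i.e., $ f_j \sim _{ ( \mathrm{id}, F ) } g_j $ for all $j$ (including $j = 1$, where $f_1 = f$ and $g_1 = g$, which holds automatically since $g$ is the augmented vector field of $f$). This gives $ \widetilde{f} \sim _{ ( \mathrm{id}, F ) } \widetilde{g} $. To invoke the converse direction of \cref{t:chi-related} and conclude $ ( \mathrm{id}, F ) \circ \Phi _f = \Phi _g \circ ( \mathrm{id}, F ) $, I need $ ( \mathrm{id}, F ) \circ \Phi _f $ and $ \Phi _g \circ ( \mathrm{id}, F ) $ to be real analytic in $h$ at $h = 0$. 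The analyticity of $ \Phi _{hf} $ in $h$ handles the $\Phi_f$ part and, composing with the (fixed, $h$-independent) analytic map $ ( \mathrm{id}, F ) $, also the $ \Phi_g \circ ( \mathrm{id}, F ) $ part — here I use that $F$ is real analytic and that the augmented vector field $g$ of an analytic-in-$h$ family stays analytic, so $ \Phi_{hg} $ is analytic in $h$ as well. This is exactly why the converse carries the two stated hypotheses.

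The main obstacle, such as it is, is bookkeeping rather than mathematics: making sure the identification of the $j$-th term of $\widetilde{g}$ with $\phi_j$ applied to the \emph{augmented} vector field is stated cleanly, and being careful that the analyticity hypotheses in the converse are precisely what is needed to run the converse half of \cref{t:chi-related} after composing with $ ( \mathrm{id}, F ) $. Both of these are routine once spelled out, so the proof will be short — essentially a one-line invocation of \cref{t:chi-related} in each direction, plus a sentence reconciling the relatedness statement with the definition of $F$-functional equivariance of the $\phi_j$.
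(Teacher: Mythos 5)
Your proposal is correct and is exactly the paper's argument: the paper's entire proof is ``Apply \cref{t:chi-related}, where $ \chi = ( \mathrm{id}, F ) $,'' and your write-up simply spells out the bookkeeping (identifying the terms of $\widetilde{g}$ with $\phi_j(g)$ and checking that the analyticity hypotheses feed into the converse half of \cref{t:chi-related}).
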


\begin{proof}
  Apply \cref{t:chi-related}, where $ \chi = ( \mathrm{id}, F ) $.
\end{proof}

\begin{remark}
  Unlike many of the results in \cref{s:integrator_maps}, this
  corollary does not require the additional assumption of affine
  equivariance. However, if we \emph{do} have affine equivariance,
  then combining \cref{c:ae,c:fe} links the results of
  \cref{s:integrator_maps} for affine equivariant integrator maps to
  the corresponding results of \citet{McSt2022} for affine equivariant
  integrators.
\end{remark}

\begin{remark}
  Except in the case where $F$ is affine, we generally do \emph{not}
  have a version of \cref{p:ae_delta}, in either direction, for
  $F$-functional equivariance of the integrator maps $ \delta _j
  $. For example, the implicit midpoint method
  $ \Phi = \mathrm{id} + \Bseries{[]} + \frac{1}{2} \Bseries{[[]]} +
  \cdots $ is a quadratic functionally equivariant integrator, but
  $ \delta _2 = \frac{1}{2} \Bseries{[[]]} $ is not a quadratic
  functionally equivariant integrator map, as shown in
  \hyperref[i:I_example]{\cref*{e:b-series_fe}\ref*{i:I_example}}. On
  the other hand, Euler's method $ \Phi = \mathrm{id} + \Bseries{[]} $
  has $ \delta _j = 0 $ for all $ j = 2, 3, \ldots ,$ and trivially
  $ 0 \sim _{ (\mathrm{id}, F ) } 0 $ for any $F$ whatsoever, but
  Euler's method is not quadratic functionally equivariant.
\end{remark}

\section{Generalization to additive and partitioned methods}
\label{s:additive_partitioned}

The integrator maps and integrators discussed in the preceding
sections include Runge--Kutta and B-series methods, but not additive
methods (such as additive Runge--Kutta and NB-series methods
\citep{ArMuSa1997} and splitting/composition methods \citep{McQu2002})
or partitioned methods (such as partitioned Runge--Kutta methods and
P-series methods \citep{Hairer1980}). In this section, we briefly
discuss the extension of the foregoing theory to these two classes of
methods. For each class, we modify the notion of integrator map and
functional equivariance, similarly to how this was done for
integrators in \citet[Section~5]{McSt2022}.

\subsection{Additive methods}

An additive method is applied to a vector field
$ f \in \mathfrak{X} (Y) $ after it has been decomposed into a sum
$ f = f ^{ [1] } + \cdots + f ^{ [N] } $, and different decompositions
of the same $f$ may yield different numerical trajectories.

\begin{definition}
  An \emph{additive integrator map} is a collection of smooth maps
  \begin{equation*}
    \phi _Y \colon \underbrace{\mathfrak{X} (Y) \times \cdots \times \mathfrak{X}  (Y)} _N  \rightarrow \mathfrak{X} (Y)
  \end{equation*}
  for each Banach space $Y$, where $ N \in \mathbb{N} $ is the same
  for all $Y$. We denote the application of $\phi$ to
  $ f = f ^{ [1] } + \cdots + f ^{ [N] } \in \mathfrak{X} (Y) $ by
  $ \widetilde{f} = \phi ( f ^{ [1] } , \ldots, f ^{ [N] } ) $, where
  it is understood that $ \widetilde{f} $ depends on the decomposition
  and not just on $f$ itself.
\end{definition}

The following definitions extend the notions of affine equivariance
and functional equivariance to additive integrator maps.

\begin{definition}
  An additive integrator map $\phi$ is \emph{$N$-affine equivariant}
  if, for all affine maps $A$, we have
  $ \phi ( f ^{ [1] } , \ldots, f ^{ [N] } ) \sim _A \phi ( g ^{ [1] }
  , \ldots, g ^{ [N] } ) $ whenever
  $ f ^{ [\nu] } \sim _A g ^{ [\nu] } $ for all
  $ \nu = 1 , \ldots, N $.
\end{definition}

\begin{definition}
  Given a G\^ateaux differentiable map $ F \colon Y \rightarrow Z $
  and $ f ^{ [1] } , \ldots, f ^{ [N] } \in \mathfrak{X} (Y) $, let
  $ g ^{ [\nu] } ( y, z ) = \bigl( f ^{[\nu]} (y) , F ^\prime (y) f
  ^{[\nu]} (y) \bigr) \in \mathfrak{X} (Y \times Z) $ be the augmented
  vector field of $ f ^{ [\nu] } $ for $ \nu = 1 , \ldots, N $. An
  additive integrator map $\phi$ is \emph{$F$-functionally
    equivariant} if
  $ \phi ( f ^{ [1] } , \ldots, f ^{ [N] } ) \sim _{ (\mathrm{id}, F )
  } \phi ( g ^{ [1] } , \ldots, g ^{ [N] } ) $ for all
  $ f ^{ [1] } , \ldots, f ^{ [N] } \in \mathfrak{X} (Y) $, and
  \emph{$\mathcal{F}$-functionally equivariant} if this holds for all
  $ F \in \mathcal{F} ( Y, Z ) $ and all Banach spaces $Y$ and $Z$.
\end{definition}

We next prove a version of \cref{p:gtilde_affine}, which strengthens
the notion of functional equivariance for $N$-affine equivariant
methods.

\begin{proposition}
  If $\phi$ is $N$-affine equivariant, then
  $ \widetilde{g} ( \widetilde{y} , \widetilde{z} ) = \widetilde{g}
  \bigl( \widetilde{y} , F ( \widetilde{y} ) \bigr) $ for all
  $ ( \widetilde{y}, \widetilde{z} ) \in Y \times Z $. Consequently,
  the $F$-functional equivariance condition \eqref{e:fe_mvf} holds if
  and only if
  $ \widetilde{g} ( \widetilde{y} , \widetilde{z} ) = \bigl(
  \widetilde{f} ( \widetilde{y} ) , F ^\prime ( \widetilde{y} )
  \widetilde{f} ( \widetilde{y} ) \bigr) $.
\end{proposition}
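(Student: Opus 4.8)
The plan is to mimic the proof of \cref{p:gtilde_affine} verbatim, with the single change that the hypothesis $ g \sim _A g $ must now be verified for each component $ g ^{ [\nu] } $ separately, after which $N$-affine equivariance (rather than ordinary affine equivariance) delivers the conclusion.

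Concretely, I would fix a constant $ c \in Z $ and take the affine map $ A ( y, z ) = ( y , z + c ) $ on $ Y \times Z $. For each $ \nu = 1 , \ldots, N $, the augmented vector field $ g ^{ [\nu] } ( y, z ) = \bigl( f ^{ [\nu] } (y) , F ^\prime (y) f ^{ [\nu] } (y) \bigr) $ depends only on $y$, so $ g ^{ [\nu] } ( y, z ) = g ^{ [\nu] } ( y , z + c ) $; since $A$ is the identity on the $Y$-component and $ g ^{ [\nu] } $ has no $z$-dependence, this is exactly the statement $ g ^{ [\nu] } \sim _A g ^{ [\nu] } $. Applying $N$-affine equivariance with the tuple $ ( g ^{ [1] } , \ldots, g ^{ [N] } ) $ on both sides then gives $ \widetilde{g} \sim _A \widetilde{g} $, where $ \widetilde{g} = \phi ( g ^{ [1] } , \ldots, g ^{ [N] } ) $; unwinding the definition of $A$-relatedness, this says $ \widetilde{g} ( \widetilde{y} , \widetilde{z} ) = \widetilde{g} ( \widetilde{y} , \widetilde{z} + c ) $ for every $ c \in Z $. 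For a fixed $ ( \widetilde{y} , \widetilde{z} ) $, choosing $ c = F ( \widetilde{y} ) - \widetilde{z} $ yields $ \widetilde{g} ( \widetilde{y} , \widetilde{z} ) = \widetilde{g} \bigl( \widetilde{y} , F ( \widetilde{y} ) \bigr) $, which is the first assertion.

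For the ``consequently'' clause: by definition, $F$-functional equivariance of the additive integrator map $\phi$ means $ \widetilde{f} \sim _{ ( \mathrm{id}, F ) } \widetilde{g} $, i.e., $ \widetilde{g} \bigl( \widetilde{y} , F ( \widetilde{y} ) \bigr) = \bigl( \widetilde{f} ( \widetilde{y} ) , F ^\prime ( \widetilde{y} ) \widetilde{f} ( \widetilde{y} ) \bigr) $ for all $ \widetilde{y} $. Combining this with the identity just proved shows that $F$-functional equivariance is equivalent to $ \widetilde{g} ( \widetilde{y} , \widetilde{z} ) = \bigl( \widetilde{f} ( \widetilde{y} ) , F ^\prime ( \widetilde{y} ) \widetilde{f} ( \widetilde{y} ) \bigr) $ holding for all $ ( \widetilde{y} , \widetilde{z} ) $, since the right-hand side is independent of $ \widetilde{z} $ and the left-hand side has just been shown to be as well.

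I do not anticipate a genuine obstacle here: the only point requiring (very mild) care is that the translation-invariance hypothesis $ g ^{ [\nu] } \sim _A g ^{ [\nu] } $ must be checked for each summand of the decomposition, not merely for the aggregate vector field, so that the $N$-ary equivariance hypothesis applies. Everything else is identical to the single-vector-field case.
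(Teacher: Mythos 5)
Your proposal is correct and follows essentially the same argument as the paper: translate in the $Z$-component via $A(y,z)=(y,z+c)$, verify $g^{[\nu]}\sim_A g^{[\nu]}$ for each summand, invoke $N$-affine equivariance to get $\widetilde{g}\sim_A\widetilde{g}$, and choose $c=F(\widetilde{y})-\widetilde{z}$. The only difference is that you spell out the ``consequently'' clause, which the paper leaves implicit.
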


\begin{proof}
  The proof is essentially the same as that of
  \cref{p:gtilde_affine}. Consider the affine map
  $ A ( y, z ) = ( y, z + c ) $, where $ c \in Z $ is a
  constant. Since each $ g ^{ [\nu] } $ depends only on $y$, we have
  $ g ^{ [\nu] } \sim _A g ^{ [\nu] } $ for $ \nu = 1, \ldots, N
  $. Thus, $N$-affine equivariance implies
  $ \widetilde{g} \sim _A \widetilde{g} $, i.e.,
  $ \widetilde{g} ( \widetilde{y}, \widetilde{z} ) = \widetilde{g} (
  \widetilde{y}, \widetilde{z} + c ) $, so taking
  $ c = F ( \widetilde{y} ) - \widetilde{z} $ completes the proof.
\end{proof}

Compare the following with \citep[Proposition~5.3]{McSt2022}.

\begin{proposition}
  Every $N$-affine equivariant integrator map is affine functionally
  equivariant.
\end{proposition}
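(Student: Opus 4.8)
The plan is to follow the proof of \cref{p:ae_implies_afe} essentially verbatim, since passing to the additive setting changes nothing of substance. Given an affine map $F \colon Y \rightarrow Z$, the augmented map $(\mathrm{id}, F) \colon Y \rightarrow Y \times Z$ is again affine. The first step is to record that, for each $\nu = 1, \ldots, N$, the component vector field $f^{[\nu]}$ is $(\mathrm{id}, F)$-related to its augmented vector field $g^{[\nu]}(y,z) = \bigl( f^{[\nu]}(y), F^\prime(y) f^{[\nu]}(y) \bigr)$. Indeed, writing $\chi = (\mathrm{id}, F)$, so that $\chi^\prime(y) = (\mathrm{id}, F^\prime(y))$, the relatedness condition $\chi^\prime(y) f^{[\nu]}(y) = g^{[\nu]}\bigl( \chi(y) \bigr)$ reads $\bigl( f^{[\nu]}(y), F^\prime(y) f^{[\nu]}(y) \bigr) = g^{[\nu]}\bigl( y, F(y) \bigr)$, which holds because $g^{[\nu]}$ is independent of its $z$-argument.

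The second step is simply to invoke $N$-affine equivariance of $\phi$: since $(\mathrm{id}, F)$ is affine and the relation $f^{[\nu]} \sim_{(\mathrm{id},F)} g^{[\nu]}$ holds for every $\nu = 1, \ldots, N$, the definition yields $\phi( f^{[1]}, \ldots, f^{[N]} ) \sim_{(\mathrm{id},F)} \phi( g^{[1]}, \ldots, g^{[N]} )$. This is precisely the $F$-functional equivariance condition in the additive setting, and since $F$ was an arbitrary affine map between arbitrary Banach spaces, $\phi$ is affine functionally equivariant.

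I do not anticipate a genuine obstacle here. The only point that requires a moment's care is that the hypothesis of $N$-affine equivariance demands the relation $f^{[\nu]} \sim_A g^{[\nu]}$ separately for each index $\nu$, rather than only for the sums $f = f^{[1]} + \cdots + f^{[N]}$ and $g = g^{[1]} + \cdots + g^{[N]}$; this is why the relatedness must be checked decomposition-by-decomposition, which is immediate from the componentwise definition of the augmented vector fields $g^{[\nu]}$. No smoothness or analyticity assumptions beyond those already present in the definitions are needed.
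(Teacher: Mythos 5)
Your proof is correct and follows essentially the same route as the paper's: observe that $(\mathrm{id},F)$ is affine, check $f^{[\nu]} \sim_{(\mathrm{id},F)} g^{[\nu]}$ for each $\nu$ separately, and apply the definition of $N$-affine equivariance. The extra verification you include of the componentwise relatedness is correct and merely makes explicit what the paper leaves implicit.
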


\begin{proof}
  As in the proof of \cref{p:ae_implies_afe}, if $F$ is affine, then
  so is $ ( \mathrm{id}, F ) $. Since
  $ f ^{ [\nu] } \sim _{ (\mathrm{id}, F) } g ^{ [\nu] } $ for
  $ \nu = 1 , \ldots, N $, if $\phi$ is $N$-affine equivariant, then
  $ \phi ( f ^{ [1] } , \ldots, f ^{ [N] } ) \sim _{ (\mathrm{id}, F )
  } \phi ( g ^{ [1] } , \ldots, g ^{ [N] } ) $.
\end{proof}

Before proving an additive version of \cref{t:fe_iff_invariant}, we
note an important distinction between ordinary and additive integrator
maps. Affine equivariant integrator maps preserve affine invariants,
since $ f \sim _A 0 $ implies $ \phi (f) \sim _A \phi (0) = 0 $. In
contrast, it is possible for an additive integrator map to be
$N$-affine equivariant but \emph{not} preserve affine invariants of
$f$, since $ f \sim _A 0 $ does not necessarily imply
$ f ^{ [\nu] } \sim _A 0 $ for all $ \nu = 1 , \ldots, N $, unless $A$
is also an invariant of each individual $ f ^{ [\nu] } $. This is
illustrated in the following examples.

\begin{example}
  \label{e:nb_affine}
  We consider affine invariant preservation of some simple NB-series
  with $ N = 2 $. These can be represented in terms of trees with
  black and white vertices, cf.~\citet*{ArMuSa1997}.
  \begin{enumerate}[label=(\roman*)]
  \item The integrator maps
    $ \Bseries{[]} (f ^{ [1] }, f ^{ [2] } ) = f ^{ [1] } $ and
    $ \Bseries{[o]} (f ^{ [1] }, f ^{ [2] } ) = f ^{ [2] } $ do not
    necessarily preserve affine invariants of $f$, since we may have
    $ A ^\prime f = 0 $ but $ A ^\prime f ^{ [\nu] } \neq 0 $ for
    $ \nu = 1, 2 $. However, the integrator map
    $ ( \Bseries{[]} + \Bseries{[o]} ) (f ^{ [1] }, f ^{ [2] } ) = f $
    clearly does preserve affine invariants of $f$.

  \item Similarly,
    $ \Bseries{[[]]} (f ^{ [1] }, f ^{ [2] } ) = f ^{ [1] \prime } f
    ^{ [1] } $ and
    $ \Bseries{[o[]]} (f ^{ [1] }, f ^{ [2] } ) = f ^{ [2] \prime } f
    ^{ [1] } $ do not necessarily preserve affine invariants of $f$.
    However,
    $ ( \Bseries{[[]]} + \Bseries{[o[]]} ) (f ^{ [1] }, f ^{ [2] } ) =
    f ^\prime f ^{ [1] } $ does, since $ A ^\prime f = 0 $ implies
    that
    $ A ^\prime f ^\prime f ^{[1]}= ( A ^\prime f ) ^\prime f ^{[1]} =
    0 $.
  \end{enumerate}
  In general, the condition for NB-series to preserve affine
  invariants of $f$, for arbitrary decompositions, is that trees must
  have the same coefficients if they differ only in the color of their
  roots. Indeed, if $ [ \tau _1 , \ldots, \tau _m ] ^{ [\nu] } $
  denotes the $N$-colored tree whose root has the color $\nu$, and
  whose children are the roots of the subtrees
  $ \tau _1, \ldots, \tau _m $, then these having equal coefficients
  allows us to collect the terms
  \begin{align*} 
    A ^\prime \sum _{ \nu = 1 } ^N [ \tau _1 , \ldots , \tau _m ] ^{ [\nu] } ( f ^{ [1] }, \ldots, f ^{ [N] } )
    &= A ^\prime \sum _{ \nu = 1 } ^N ( f ^{ [\nu] } ) ^{ (m) } \Bigl(  \tau _1 ( f ^{ [1] }, \ldots, f ^{ [N] } ) , \ldots, \tau _m ( f ^{ [1] }, \ldots, f ^{ [N] } ) \Bigr) \\
    &= ( A ^\prime f ) ^{ (m) } \Bigl(  \tau _1 ( f ^{ [1] }, \ldots, f ^{ [N] } ) , \ldots, \tau _m ( f ^{ [1] }, \ldots, f ^{ [N] } ) \Bigr) ,
  \end{align*} 
  which vanishes if $ A ^\prime f = 0 $.
\end{example}

Thus, the $ ( \Leftarrow ) $ direction of \cref{t:fe_iff_invariant}
does not hold for $N$-affine equivariant integrators, even when
$\mathcal{F}$ is the class of affine maps, but it does hold if we
require the the additional condition of affine invariant
preservation. Compare the following with
\citep[Theorem~5.7]{McSt2022}.

\begin{theorem}
  \label{t:N_fe_iff_invariant}
  Let $\mathcal{F}$ satisfy \cref{a:affine}. An $N$-affine equivariant
  integrator map $\phi$ is $\mathcal{F}$-invariant preserving if and
  only if it is $\mathcal{F}$-functionally equivariant and affine
  invariant preserving.
\end{theorem}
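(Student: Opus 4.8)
The plan is to mirror the proof of \cref{t:fe_iff_invariant}, inserting the hypothesis of affine invariant preservation at the one point where $N$-affine equivariance is strictly weaker than ordinary affine equivariance. Recall from the discussion preceding \cref{e:nb_affine} that an $N$-affine equivariant additive map need not preserve affine invariants of $f = \sum_\nu f^{[\nu]}$: having $f \sim_A 0$ does not force $f^{[\nu]} \sim_A 0$ for the individual summands, so $N$-affine equivariance alone gives no information. I therefore expect the $(\Leftarrow)$ direction to be where the extra hypothesis must be used, in place of the step of \cref{t:fe_iff_invariant} that applied affine equivariance to the projection onto $Z$.

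For $(\Rightarrow)$, suppose $\phi$ is $\mathcal{F}$-invariant preserving. Affine invariant preservation is then immediate, since \cref{a:affine} forces every affine map between Banach spaces to lie in $\mathcal{F}$ (write such a map as $B \circ \mathrm{id} \circ A$, using $\mathrm{id} \in \mathcal{F}$ and closure under composition with affine maps). For $\mathcal{F}$-functional equivariance, fix $F \in \mathcal{F}(Y,Z)$; exactly as in \cref{t:fe_iff_invariant}, $G(y,z) = F(y) - z$ lies in $\mathcal{F}(Y \times Z, Z)$ and is an invariant of the summed augmented field $g = \sum_\nu g^{[\nu]} = (f, F'f)$. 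Writing $\widetilde{g} = (\widetilde{g}_Y, \widetilde{g}_Z)$, invariant preservation gives $F'(\widetilde{y}) \widetilde{g}_Y(\widetilde{y},\widetilde{z}) = \widetilde{g}_Z(\widetilde{y},\widetilde{z})$. Applying $N$-affine equivariance to the projection $A(y,z) = y$, and using $g^{[\nu]} \sim_A f^{[\nu]}$ for every $\nu$, gives $\widetilde{g} \sim_A \widetilde{f}$, i.e.\ $\widetilde{g}_Y(\widetilde{y},\widetilde{z}) = \widetilde{f}(\widetilde{y})$. Hence $\widetilde{g}(\widetilde{y},\widetilde{z}) = \bigl(\widetilde{f}(\widetilde{y}), F'(\widetilde{y})\widetilde{f}(\widetilde{y})\bigr)$, which is the $F$-functional equivariance condition (in fact its strong form, valid for all $\widetilde{z}$).

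For $(\Leftarrow)$, suppose $\phi$ is $\mathcal{F}$-functionally equivariant and affine invariant preserving, and let $F \in \mathcal{F}(Y,Z)$ be an invariant of $f = \sum_\nu f^{[\nu]}$. Then $g = \sum_\nu g^{[\nu]} = (f, F'f) = (f, 0)$, and $F$-functional equivariance gives $\widetilde{g}\bigl(\widetilde{y}, F(\widetilde{y})\bigr) = \bigl(\widetilde{f}(\widetilde{y}), F'(\widetilde{y})\widetilde{f}(\widetilde{y})\bigr)$. The key step — the one unavailable from $N$-affine equivariance — is that the projection $B(y,z) = z$ is an affine invariant of $g$, its $Z$-component being identically zero; affine invariant preservation therefore forces the $Z$-component of $\widetilde{g}$ to vanish identically, and evaluating at $\widetilde{z} = F(\widetilde{y})$ yields $F'(\widetilde{y})\widetilde{f}(\widetilde{y}) = 0$. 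Thus $\phi$ is $F$-invariant preserving, and hence $\mathcal{F}$-invariant preserving.

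The only real subtlety is recognizing that the closing move of $(\Leftarrow)$ in \cref{t:fe_iff_invariant}, namely ``$g \sim_B 0$, so $\widetilde{g} \sim_B \widetilde{0} = 0$ by affine equivariance,'' breaks down: only the sum $g = \sum_\nu g^{[\nu]}$ is $B$-related to $0$, not the summands, so $N$-affine equivariance says nothing, and the argument instead needs the weaker but sufficient input of affine invariant preservation. (Notice that $(\Leftarrow)$ then uses only the two stated hypotheses, not $N$-affine equivariance per se.) It is worth a short remark, paralleling \cref{e:nb_affine}, that without the affine invariant preservation hypothesis the $(\Leftarrow)$ implication genuinely fails.
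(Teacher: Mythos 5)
Your proof is correct and follows essentially the same route as the paper's: the $(\Rightarrow)$ direction reuses the argument of \cref{t:fe_iff_invariant} with the projection step adapted via $g^{[\nu]} \sim_A f^{[\nu]}$, and the $(\Leftarrow)$ direction replaces the affine-equivariance step with the affine invariant preservation hypothesis applied to $B(y,z)=z$. Your closing remark correctly pinpoints exactly where and why the extra hypothesis is needed, matching the paper's discussion around \cref{e:nb_affine}.
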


\begin{proof}
  $ ( \Rightarrow ) $ Suppose $\phi$ is $\mathcal{F}$-invariant
  preserving. The proof of $\mathcal{F}$-functional equivariance is
  essentially the same as in \cref{t:fe_iff_invariant}. The only
  notable modification is that, at the step where $ A ( y, z ) = y $
  is the linear projection onto $Y$, we use
  $ g ^{ [\nu] } \sim _A f ^{ [\nu] } $ for $ \nu = 1 , \ldots , N $
  to conclude that $ \widetilde{g} \sim _A \widetilde{f} $ by
  $N$-affine equivariance. Furthermore, affine invariant preservation
  follows from $\mathcal{F}$-invariant preservation, since
  \cref{a:affine} implies that $\mathcal{F}$ includes all affine maps.

  $ ( \Leftarrow ) $ Conversely, suppose that $\phi$ is
  $\mathcal{F}$-functionally equivariant and affine invariant
  preserving. Just as in the proof of \cref{t:fe_iff_invariant}, if
  $ F \in \mathcal{F} ( Y, Z ) $ is an invariant of
  $ f \in \mathfrak{X} (Y) $, then
  $ g ( y, z ) = \bigl( f (y) , 0 \bigr) $, and
  $\mathcal{F}$-functional equivariance implies
  $ \widetilde{g} ( \widetilde{y}, \widetilde{z} ) = \bigl(
  \widetilde{f} ( \widetilde{y} ) , F ^\prime ( \widetilde{y} )
  \widetilde{f} ( \widetilde{y} ) \bigr) $. Finally, the linear
  projection $ B ( y, z ) = z $ is an affine invariant of $g$ and thus
  of $ \widetilde{g} $, so
  $ F ^\prime ( \widetilde{y} ) \widetilde{f} ( \widetilde{y} ) = 0 $.
\end{proof}

We next give an integrator map version of
\citep[Corollary~5.9]{McSt2022}, which says that a consistent
splitting method cannot preserve affine invariants unless it equals
the exact flow. Modified vector fields of splitting methods contain
only the terms $ f ^{ [1] } , \ldots, f ^{ [N] } $ and their iterated
Jacobi--Lie brackets, as a consequence of the
Baker--Campbell--Hausdorff formula
\citep[Section~IX.4]{HaLuWa2006}. Since Jacobi--Lie brackets of
$\chi$-related vector fields are $\chi$-related for \emph{any} $\chi$
whatsoever \citep[Proposition~4.2.25]{AbMaRa1988}, it follows that
modified vector fields of splitting methods are $N$-affine equivariant
and $F$-functionally equivariant with respect to \emph{all} maps $F$.
To prove that any consistent integrator map with this property must
agree with the exact flow, we first establish the following
consequence of consistency.

\begin{lemma}
  \label{l:constant_splitting}
  Suppose an $N$-affine equivariant integrator map $\phi$ satisfies
  the consistency condition
  $ \phi ( h f ^{ [1] }, \ldots, h f ^{ [N] } ) = h f + o (h) $. If
  $ f ^{ [\nu] } $ is constant for all $\nu = 1 , \ldots, N $, then
  $ \phi ( f ^{ [1] }, \ldots , f ^{ [N] } ) = f $.
\end{lemma}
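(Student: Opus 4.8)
The plan is to exploit $N$-affine equivariance by comparing the vector fields $f^{[\nu]}$ to their constant values via a well-chosen affine map, namely a translation. If each $f^{[\nu]}$ is constant, say $f^{[\nu]}(y) \equiv c_\nu \in Y$, then I want to show $\phi$ applied to this tuple is forced to equal the constant vector field $f \equiv c := c_1 + \cdots + c_N$.

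First I would observe that a constant vector field is related to its value under translation: for the affine map $A_y \colon Y \to Y$ given by $A_y(w) = w + y$, we have $A_y'(w) = \mathrm{id}$, so the constant vector field $f^{[\nu]} \equiv c_\nu$ satisfies $f^{[\nu]} \sim_{A_y} f^{[\nu]}$ trivially (both sides equal $c_\nu$). That alone isn't enough; the key is to compare with the \emph{scaled} problem. The plan is to use the consistency hypothesis: applying $\phi$ to $(hf^{[1]}, \ldots, hf^{[N]})$ gives $hf + o(h)$. Since scaling a constant vector field by $h$ just scales its (constant) value, $hf^{[\nu]}$ is again constant with value $hc_\nu$. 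I would then use $N$-affine equivariance with respect to the linear scaling map $s_h \colon Y \to Y$, $s_h(y) = hy$ (for $h \ne 0$): since $f^{[\nu]}(y) = c_\nu$ and $(hf^{[\nu]})(hy) = hc_\nu$, we have $s_h'(y)\, f^{[\nu]}(y) = h c_\nu = (hf^{[\nu]})(s_h(y))$, so $f^{[\nu]} \sim_{s_h} hf^{[\nu]}$. By $N$-affine equivariance, $\phi(f^{[1]},\ldots,f^{[N]}) \sim_{s_h} \phi(hf^{[1]},\ldots,hf^{[N]})$, i.e., $h\cdot\phi(f^{[1]},\ldots,f^{[N]})(y) = \phi(hf^{[1]},\ldots,hf^{[N]})(hy)$ for all $y$ and all $h \ne 0$.

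Next I would combine this with consistency. Write $\psi := \phi(f^{[1]},\ldots,f^{[N]}) \in \mathfrak{X}(Y)$. The relation above says $h\,\psi(y) = \phi(hf^{[1]},\ldots,hf^{[N]})(hy) = hf(hy) + o(h)$ as $h \to 0$, where $f$ is the constant vector field with value $c$, so $f(hy) = c$. Dividing by $h$ and letting $h \to 0$ gives $\psi(y) = c = f(y)$ for every $y \in Y$. Actually I should be careful that the $o(h)$ estimate from consistency is uniform enough in the spatial variable to allow this pointwise conclusion at the point $hy$; but since $hy \to 0$ as $h \to 0$ and $\phi_Y$ is smooth (hence the expansion is at least locally well-behaved near $0$), evaluating at the moving point $hy$ still yields $hf(hy) + o(h) = hc + o(h)$, and the conclusion follows. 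Thus $\phi(f^{[1]},\ldots,f^{[N]}) = f$.

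\textbf{The main obstacle} I anticipate is getting the argument with the moving evaluation point $hy$ to be rigorous: consistency is stated as $\phi(hf^{[1]},\ldots,hf^{[N]}) = hf + o(h)$, and one must ensure this $o(h)$ behaves correctly when the vector field is evaluated at $hy$ rather than at a fixed point. One clean way around this is to phrase everything at the single point $y = 0$: the scaling relation gives $h\,\psi(0) = \phi(hf^{[1]},\ldots,hf^{[N]})(0) = hf(0) + o(h) = hc + o(h)$, so $\psi(0) = c$ directly, with no moving point. Then to get $\psi(y) = c$ for arbitrary $y$, I would instead apply the translation $A_{-y}$: the constant vector fields $f^{[\nu]}$ satisfy $f^{[\nu]} \sim_{A_{-y}} f^{[\nu]}$, so $\psi \sim_{A_{-y}} \psi$, meaning $\psi(w - y) = \psi(w)$ for all $w$ — i.e., $\psi$ is itself constant. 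Combined with $\psi(0) = c$, this gives $\psi \equiv c = f$, completing the proof.
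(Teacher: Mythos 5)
Your proof is correct and, in its final refined form, is essentially the same as the paper's: both use translation equivariance to show the modified vector field is constant, the scaling map $y \mapsto hy$ to relate $\phi(f^{[1]},\ldots,f^{[N]})$ to $\phi(hf^{[1]},\ldots,hf^{[N]})$, and the consistency condition together with $h$-independence to conclude $\widetilde{f}=f$. The subtlety you flagged about the moving evaluation point is exactly what the paper avoids by establishing constancy of $\widetilde{f}$ first, which is the fix you arrive at.
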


\begin{proof}
  Let $ A (y) = y + c $ for some constant $ c \in Y $. If each
  $ f ^{ [\nu] } $ is constant, then
  $ f ^{ [\nu] } \sim _A f ^{ [\nu] } $, so $N$-affine equivariance
  implies $ \widetilde{f} \sim _A \widetilde{f} $, i.e.,
  $ \widetilde{f} $ is also constant. On the other hand, letting
  $ B (y) = h y $, we have $ f ^{ [\nu] } \sim _B h f ^{ [\nu] } $, so
  applying $N$-affine equivariance again, we have
  $ \widetilde{f} \sim _B \widetilde{ h f} $, i.e.,
  $ h \widetilde{f} (\widetilde{y}) = \widetilde{ h f } ( h
  \widetilde{y} ) $. Since both $ h \widetilde{f} $ and
  $ \widetilde{ h f } $ are constant, we get
  $ h \widetilde{f} = \widetilde{ h f } = h f + o (h) $ by the
  consistency condition. Thus, $ \widetilde{f} = f + o (1) $, but
  since this does not depend on $h$ at all, we conclude that
  $ \widetilde{f} = f $.
\end{proof}

\begin{theorem}
  Consider consistent, $N$-affine equivariant integrator maps that are
  $F$-functionally equivariant for all maps $F$ (e.g., those arising
  from splitting methods). The unique such integrator map preserving
  affine invariants is
  $ \phi ( f ^{ [1] } , \ldots, f ^{ [N] } ) = f $, i.e., the exact
  flow.
\end{theorem}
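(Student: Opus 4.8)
The plan is to reduce the general case to the constant-coefficient case of \cref{l:constant_splitting} by an autonomization trick, after first upgrading the hypotheses to preservation of \emph{all} first integrals. Observe that the class $\mathcal{F}$ of all G\^ateaux differentiable maps satisfies \cref{a:affine}: it contains the identity, it is a vector space, and it is closed under pre- and post-composition with affine maps. Since $\phi$ is $\mathcal{F}$-functionally equivariant by hypothesis and preserves affine invariants, \cref{t:N_fe_iff_invariant} shows that $\phi$ is $\mathcal{F}$-invariant preserving; that is, for any Banach spaces $U,Z$, any $k^{[1]},\ldots,k^{[N]}\in\mathfrak{X}(U)$ with sum $k$, and any G\^ateaux differentiable $F\colon U\to Z$ with $F^\prime k = 0$, the modified vector field $\widetilde{k} = \phi(k^{[1]},\ldots,k^{[N]})$ also has $F$ as an invariant. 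It is \emph{essential} to route through \cref{t:N_fe_iff_invariant} here rather than to use functional equivariance directly, since the individual augmented fields $g^{[\nu]}$ of an invariant $F$ need not have $z$ as an affine invariant --- only their sum does --- so $N$-affine equivariance alone does not force the $Z$-component of $\widetilde{g}$ to vanish.

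Now fix $f^{[1]},\ldots,f^{[N]}\in\mathfrak{X}(Y)$ with sum $f$, fix $y_0\in Y$, and write $\widetilde{f} = \phi(f^{[1]},\ldots,f^{[N]})$; the goal is $\widetilde{f}(y_0) = f(y_0)$. On $\widehat{Y} = \mathbb{R}\times Y$ I would use the decomposition $\widehat{f}^{[1]}(s,y) = \bigl(1, f^{[1]}(y)\bigr)$ and $\widehat{f}^{[\nu]}(s,y) = \bigl(0, f^{[\nu]}(y)\bigr)$ for $\nu\geq 2$, so that $\widehat{f} = \sum_\nu\widehat{f}^{[\nu]} = \bigl(1, f(y)\bigr)$, and set $\widetilde{\widehat{f}} = \phi(\widehat{f}^{[1]},\ldots,\widehat{f}^{[N]})$. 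Both coordinate projections of $\widehat{Y}$ are linear, hence affine. Along $\pi_Y(s,y) = y$ we have $\widehat{f}^{[\nu]}\sim_{\pi_Y} f^{[\nu]}$ for every $\nu$, so $N$-affine equivariance gives $\widetilde{\widehat{f}}\sim_{\pi_Y}\widetilde{f}$; and along $\pi_{\mathbb{R}}(s,y) = s$ we have $\widehat{f}^{[1]}\sim_{\pi_{\mathbb{R}}}\mathbf{1}$ and $\widehat{f}^{[\nu]}\sim_{\pi_{\mathbb{R}}}\mathbf{0}$ for $\nu\geq 2$ (the constant unit and zero vector fields on $\mathbb{R}$), so $\widetilde{\widehat{f}}\sim_{\pi_{\mathbb{R}}}\phi(\mathbf{1},\mathbf{0},\ldots,\mathbf{0})$, which equals $\mathbf{1}$ by \cref{l:constant_splitting} since all arguments are constant. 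Combining the two relations yields $\widetilde{\widehat{f}}(s,y) = \bigl(1, \widetilde{f}(y)\bigr)$; in particular, autonomization forces the ``time'' component of the modified field to be exactly $1$.

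Finally, let $\psi^t$ be the time-$t$ flow of $f$ and put $\widehat{F}(s,y) = \psi^{-s}(y)$. This is constant along trajectories of $\widehat{f}$, hence a first integral; explicitly, $\widehat{F}^\prime(s,y)\,\widehat{f}(s,y) = -f\bigl(\psi^{-s}(y)\bigr) + D\psi^{-s}(y)\,f(y) = 0$ by the flow-invariance identity $(\psi^{-s})_\ast f = f$. By the first paragraph, $\widehat{F}$ is then also an invariant of $\widetilde{\widehat{f}}$. Evaluating at $s = 0$, where $\widehat{F}(0,\cdot) = \mathrm{id}$ so that $\partial_s\widehat{F}(0,y_0) = -f(y_0)$ and $\partial_y\widehat{F}(0,y_0) = \mathrm{id}$, and substituting $\widetilde{\widehat{f}}(0,y_0) = \bigl(1,\widetilde{f}(y_0)\bigr)$, we obtain $0 = \widehat{F}^\prime(0,y_0)\,\widetilde{\widehat{f}}(0,y_0) = -f(y_0) + \widetilde{f}(y_0)$, so $\widetilde{f}(y_0) = f(y_0)$. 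Since $y_0$ was arbitrary, $\widetilde{f} = f$. For the reverse direction, the exact-flow map $\phi(f^{[1]},\ldots,f^{[N]}) = f$ is trivially $N$-affine equivariant and consistent, and is $F$-functionally equivariant and invariant preserving for all $F$ by naturality of flows, so it is indeed the unique integrator map with the stated properties.

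The main obstacle is analytic rather than algebraic: the inverse flow $\widehat{F}(s,y) = \psi^{-s}(y)$ is only guaranteed to be globally defined and smooth when $f$ is complete. One can either restrict attention to complete vector fields, or argue with the germ at $y_0$ (which suffices for splitting methods, whose modified vector fields are local differential operators), or simply invoke the paper's standing disclaimer about analytical subtleties of backward error analysis on Banach spaces. Granting this, the only genuinely nonroutine step is the realization that autonomization is precisely what lets us feed the problem into \cref{l:constant_splitting} while simultaneously pinning the time-rescaling to $1$.
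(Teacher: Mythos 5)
Your proposal is correct and follows essentially the same route as the paper: upgrade to preservation of all invariants via \cref{t:N_fe_iff_invariant}, autonomize by adjoining $\dot{t}=1$ with a constant splitting of the time component so that \cref{l:constant_splitting} pins the modified time component to $1$, and then apply invariance of $\exp(-tf)(y)$ to force $\widetilde{f}=f$. Your version is somewhat more explicit (a specific choice of the constants $c^{[\nu]}$, the pointwise evaluation of the invariant's derivative, and the remark on completeness of the flow), but these are elaborations of the paper's argument rather than a different proof.
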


\begin{proof}
  By \cref{t:N_fe_iff_invariant}, any such $\phi$ preserving affine
  invariants must preserve \emph{all} invariants. Given
  $ f \in \mathfrak{X} (Y) $, consider the vector field
  $ ( f, 1 ) \in \mathfrak{X} ( Y \times \mathbb{R} ) $. This augments
  $ \dot{y} = f (y) $ by the equation $ \dot{t} = 1 $, where $t$ may
  be seen as time. Thus, $ F ( y, t ) = \exp ( - t f ) (y) $ is an
  invariant of $ ( f, 1 ) $.

  Now, suppose we split $ ( f , 1 ) $ into
  $ ( f ^{ [\nu] } , c ^{ [\nu] } ) $, where
  $ c ^{ [\nu] } \in \mathbb{R} $ are constants summing to $1$. By
  \cref{l:constant_splitting} and $N$-affine equivariance with respect
  to $ ( y, t ) \mapsto t $, we have
  $ \widetilde{ ( f , 1 ) } = ( \widetilde{f} , 1 ) $. Since $\phi$
  preserves the invariant $F$, we conclude that
  $ \exp ( - t f ) ( \widetilde{y} ) $ is an invariant of
  $ ( \widetilde{f} , 1 ) $, and thus $ \widetilde{f} = f $.
\end{proof}

\begin{remark}
  Without the consistency hypothesis, which allows us to use
  \cref{l:constant_splitting}, $ \dot{ t } = 1 $ does not necessarily
  imply $ \dot{ \widetilde{ t } } = 1 $. This allows for the
  possibility $ \widetilde{f} = c f $ with $ c \neq 1 $, giving a time
  reparametrization of the exact flow.
\end{remark}

Finally, we note that \cref{t:chi-related} holds virtually unchanged
for the relationship of additive integrators to their modified vector
fields, where we need only replace $ \Phi _{ h f } $ by
$ \Phi _{ h f ^{ [1] }, \ldots, h f ^{ [N] } } $ and $ \Phi _{ h g } $
by $ \Phi _{ h g ^{ [1] }, \ldots, h g ^{ [N] } } $. Thus, we
immediately get additive-integrator versions of \cref{c:ae} for
$N$-affine equivariance and \cref{c:fe} for $F$-functional
equivariance, in the sense of \citep[Section~5.1]{McSt2022}.

\subsection{Partitioned methods}

A partitioned method is based on a partitioning
$ Y = Y ^{ [1] } \oplus \cdots \oplus Y ^{ [N] } $. These are similar
to additive methods, except the decomposition
$ f = f ^{ [1] } + \cdots + f ^{ [N] } $ is uniquely determined by the
partitioning of $Y$.

\begin{definition}
  A \emph{partitioned integrator map} is a collection of smooth maps
  \begin{equation*}
    \phi _{ Y ^{ [1] } \oplus \cdots \oplus Y ^{ [N] } } \colon \mathfrak{X}  ( Y ) \rightarrow \mathfrak{X}  (Y) ,
  \end{equation*}
  for each partitioned Banach space
  $ Y = \bigoplus _{ \nu = 1 } ^N Y ^{ [\nu] } $, where
  $ N \in \mathbb{N} $ is fixed. For a given partitioning of $Y$, we
  simply write $ \widetilde{f} = \phi (f) $ for
  $ f \in \mathfrak{X} (Y) $.
\end{definition}

For partitioned methods, rather than considering all affine maps
between Banach spaces, we consider particular affine maps that respect
the partitioning, cf.~\citep[Definition~5.10]{McSt2022}.

\begin{definition}
  Given partitioned spaces
  $ Y = \bigoplus _{ \nu = 1 } ^N Y ^{ [\nu] } $ and
  $ U = \bigoplus _{ \nu = 1 } ^N U ^{ [\nu] } $, a map
  $ A \colon Y \rightarrow U $ is \emph{P-affine} if it decomposes as
  $ A = \bigoplus _{ \nu = 1 } ^N A ^{ [\nu] } $, where each
  $ A ^{ [\nu] } \colon Y ^{ [\nu] } \rightarrow U ^{ [\nu] } $ is
  affine. A partitioned integrator map $\phi$ is \emph{P-affine
    equivariant} if $ f \sim _A g $ implies
  $ \phi (f) \sim _A \phi (g) $ for all P-affine maps $A$.
\end{definition}

\begin{remark}
  When $A$ is P-affine, $ f \sim _A g $ is equivalent to
  $ f ^{ [\nu] } \sim _{ A ^{ [\nu] } } g ^{ [\nu] } $ for all
  $ \nu = 1 , \ldots, N $.
\end{remark}

In particular, let $ A \colon Y \rightarrow \mathbb{R} $ be an affine
functional, and partition $ \mathbb{R} ^{ [\mu] } = \mathbb{R} $ and
$ \mathbb{R} ^{ [\nu] } = \{ 0 \} $ for $ \nu \neq \mu $. Then $A$ is
P-affine if and only if $ A = A ^{ [\mu] } $, i.e., $A$ depends only
on $ Y ^{ [\mu] } $ \citep[Example~5.11]{McSt2022}.

The following integrator-map version of
\citep[Proposition~5.12]{McSt2022} shows how $N$-affine equivariant
integrator maps (e.g., NB-series) give rise to P-affine equivariant
integrator maps (e.g., P-series).

\begin{proposition}
  If an additive integrator map $\psi$ is $N$-affine equivariant, then
  the partitioned integrator map
  $ \phi (f) = \psi ( f ^{ [1] } , \ldots, f ^{ [N] } ) $ is P-affine
  equivariant.
\end{proposition}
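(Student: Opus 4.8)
The plan is to reduce P-affine equivariance of $\phi$ to the $N$-affine equivariance of $\psi$, using the fact that a P-affine map preserves the direct-sum structure and hence the $\chi$-relatedness condition splits componentwise. The key observation is that, although $N$-affine equivariance of $\psi$ is phrased in terms of relatedness with respect to a \emph{single} affine map $A$, the P-affine hypothesis on $A$ lets us transfer the relatedness $f \sim_A g$ to each component pair $f^{[\nu]} \sim_A g^{[\nu]}$ with respect to that same $A$.

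Concretely, I would fix partitioned Banach spaces $Y = \bigoplus_\nu Y^{[\nu]}$ and $U = \bigoplus_\nu U^{[\nu]}$, a P-affine map $A = \bigoplus_\nu A^{[\nu]} \colon Y \to U$, and vector fields $f \in \mathfrak{X}(Y)$, $g \in \mathfrak{X}(U)$ with $f \sim_A g$, i.e., $A' \circ f = g \circ A$. Because $A$ decomposes as a direct sum of affine maps, its linear part $A'$ is block-diagonal, carrying $Y^{[\nu]}$ into $U^{[\nu]}$. Writing $f = \sum_\nu f^{[\nu]}$ and $g = \sum_\nu g^{[\nu]}$ for the decompositions determined by the partitionings of $Y$ and $U$, the $U^{[\nu]}$-component of $A'(y) f(y)$ is exactly $A'(y) f^{[\nu]}(y)$, since $A' f^{[\mu]}(y) \in U^{[\mu]}$ for each $\mu$; and the $U^{[\nu]}$-component of $g(A(y))$ is $g^{[\nu]}(A(y))$ by definition of the decomposition of $g$. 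Matching $U^{[\nu]}$-components of $A' \circ f = g \circ A$ (using uniqueness of components in $U$) gives $A' \circ f^{[\nu]} = g^{[\nu]} \circ A$, that is, $f^{[\nu]} \sim_A g^{[\nu]}$ for every $\nu = 1, \ldots, N$.

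Now $N$-affine equivariance of $\psi$ applies directly: from $f^{[\nu]} \sim_A g^{[\nu]}$ for all $\nu$ we obtain $\psi(f^{[1]}, \ldots, f^{[N]}) \sim_A \psi(g^{[1]}, \ldots, g^{[N]})$, which is precisely $\phi(f) \sim_A \phi(g)$. Since $A$ was an arbitrary P-affine map, $\phi$ is P-affine equivariant, as claimed.

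There is no real obstacle here; the only step needing care is the componentwise splitting of the relatedness condition, namely checking that the derivative of a P-affine map respects the direct-sum decomposition so that the $U^{[\nu]}$-pieces of $A' \circ f = g \circ A$ can be matched individually. This is essentially the content of the preceding remark characterizing $f \sim_A g$ for P-affine $A$, specialized to the fact that $A$ restricted to $Y^{[\nu]}$ coincides with $A^{[\nu]}$; I would either invoke that remark or spell out the short argument above. Everything else is an immediate application of the additive theory already established.
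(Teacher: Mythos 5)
Your proof is correct and follows the same route as the paper, which disposes of the statement in one line (``P-affine maps are affine'') after noting in the preceding remark that $ f \sim _A g $ splits componentwise for P-affine $A$. You have simply spelled out that componentwise splitting---including the useful observation that the components are related with respect to the \emph{same} map $A$, which is exactly what the definition of $N$-affine equivariance requires---so nothing is missing.
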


\begin{proof}
  This follows immediately from the definitions, since P-affine maps
  are affine.
\end{proof}

The definition of $F$- and $\mathcal{F}$-functional equivariance is
the same as in \cref{d:fe}, where given partitions
$ Y = \bigoplus _{ \nu = 1 } ^N Y ^{ [\nu] } $ and
$ Z = \bigoplus _{ \nu = 1 } ^N Z ^{ [\nu] } $, we apply $\phi$ to the
augmented vector field by partitioning the product
$ Y \times Z = \bigoplus _{ \nu = 1 } ^N ( Y ^{ [\nu] } \times Z ^{
  [\nu] } ) $. To prove a partitioned version of
\cref{t:fe_iff_invariant}, we must introduce a P-affine version of
\cref{a:affine}; cf.~\citep[Assumption~5.14]{McSt2022}. Important
examples of $\mathcal{F}$ satisfying these assumptions are P-affine
maps, all affine maps, quadratic maps that are at most bilinear with
respect to the partition, and all quadratic maps,
cf.~\citep[Examples~5.16--5.19]{McSt2022}.

\begin{assumption}
  \label{a:p-affine}
  Assume that:
  \begin{itemize}
  \item $\mathcal{F} (Y,Y) $ contains the identity map for all
    $Y = \bigoplus _{ \nu = 1 } ^N Y ^{[\nu]}$;
  \item $ \mathcal{F} ( Y, Z ) $ is a vector space for all
    $Y = \bigoplus _{ \nu = 1 } ^N Y ^{[\nu]}$ and
    $Z=\bigoplus _{ \nu = 1 } ^N Z ^{[\nu]}$;
  \item $ \mathcal{F} $ is invariant under composition with P-affine
    maps, in the following sense: If $ A \colon Y \rightarrow U $ and
    $ B \colon V \rightarrow Z $ are P-affine and
    $ F \in \mathcal{F} ( U, V ) $, then
    $ B \circ F \circ A \in \mathcal{F} ( Y, Z ) $, for all
    $Y = \bigoplus _{ \nu = 1 } ^N Y ^{[\nu]}$,
    $Z = \bigoplus _{ \nu = 1 } ^N Z ^{[\nu]}$,
    $U = \bigoplus _{ \nu = 1 } ^N U ^{[\nu]}$, and
    $V = \bigoplus _{ \nu = 1 } ^N V ^{[\nu]}$.
  \end{itemize}
\end{assumption}

Compare the following to \citep[Theorem~5.15]{McSt2022}.

\begin{theorem}
  Let $\mathcal{F}$ satisfy \cref{a:p-affine}. A P-affine equivariant
  integrator map $\phi$ is $\mathcal{F}$-invariant preserving if and
  only if it is $\mathcal{F}$-functionally equivariant.
\end{theorem}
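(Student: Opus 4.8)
The plan is to transcribe the proof of \cref{t:fe_iff_invariant}, replacing ``affine'' by ``P-affine'' throughout and checking that every structural map used in that argument respects the partitions $ Y = \bigoplus_{\nu} Y^{[\nu]} $, $ Z = \bigoplus_{\nu} Z^{[\nu]} $, and $ Y \times Z = \bigoplus_{\nu} ( Y^{[\nu]} \times Z^{[\nu]} ) $. The maps in question are the coordinate projections $ \pi_Y \colon (y,z) \mapsto y $ and $ \pi_Z \colon (y,z) \mapsto z $, the map $ (y,z) \mapsto (y, z + c) $ for $ c \in Z $, and the constant map from the trivial partitioned space to a point of $Y$; each splits blockwise into an affine (indeed linear or constant) map, hence is P-affine, and this is essentially the only adaptation required.

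For $ (\Rightarrow) $, suppose $\phi$ is $\mathcal{F}$-invariant preserving and let $ F \in \mathcal{F}(Y,Z) $ have augmented vector field $ g(y,z) = \bigl( f(y), F'(y) f(y) \bigr) $. Since $ \pi_Y $ and $ \pi_Z $ are P-affine and $ \mathrm{id}_Z \in \mathcal{F}(Z,Z) $, \cref{a:p-affine} gives $ G(y,z) = F(y) - z = F \circ \pi_Y - \pi_Z \in \mathcal{F}( Y \times Z, Z ) $. As in \cref{t:fe_iff_invariant}, $ G'(y,z) g(y,z) = 0 $, so $\mathcal{F}$-invariant preservation forces $ F'(\widetilde y)\, \widetilde g_Y(\widetilde y, \widetilde z) = \widetilde g_Z(\widetilde y, \widetilde z) $, writing $ \widetilde g = (\widetilde g_Y, \widetilde g_Z) $. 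Moreover $ g \sim_{\pi_Y} f $, so P-affine equivariance gives $ \widetilde g_Y(\widetilde y, \widetilde z) = \widetilde f(\widetilde y) $, whence $ \widetilde g_Z(\widetilde y, \widetilde z) = F'(\widetilde y) \widetilde f(\widetilde y) $; evaluating at $ \widetilde z = F(\widetilde y) $ yields \eqref{e:fe_mvf}. (As in \cref{p:gtilde_affine}, applying P-affine equivariance to $ (y,z) \mapsto (y, z+c) $ shows $ \widetilde g $ is independent of $ \widetilde z $, so in fact it holds for all $ \widetilde z $.)

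For $ (\Leftarrow) $, suppose $\phi$ is $\mathcal{F}$-functionally equivariant and let $ F \in \mathcal{F}(Y,Z) $ be an invariant of $f$, so that $ g(y,z) = \bigl( f(y), 0 \bigr) $ and $\mathcal{F}$-functional equivariance gives $ \widetilde g\bigl( \widetilde y, F(\widetilde y) \bigr) = \bigl( \widetilde f(\widetilde y), F'(\widetilde y) \widetilde f(\widetilde y) \bigr) $. Now $ \pi_Z $ is P-affine and $ g \sim_{\pi_Z} 0 $, so P-affine equivariance gives $ \widetilde g \sim_{\pi_Z} \widetilde 0 = 0 $, where $ \widetilde 0 = 0 $ is obtained as in \citep[Lemma~6.1]{McMoMuVe2016} using the P-affine map from the trivial partitioned space to an arbitrary point. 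Reading off the $Z$-component of $ \widetilde g \sim_{\pi_Z} 0 $ shows $ \widetilde g_Z \equiv 0 $; comparing with the previous display at $ \widetilde z = F(\widetilde y) $ gives $ F'(\widetilde y) \widetilde f(\widetilde y) = 0 $, so $\phi$ is $F$-invariant preserving.

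The only step I expect to require genuine care is confirming that $ G = F \circ \pi_Y - \pi_Z $ lies in $ \mathcal{F}(Y \times Z, Z) $: this is the one place \cref{a:p-affine} is invoked, and it hinges on the product-partition projections being P-affine — automatic once the product partition is fixed as above, but worth checking, since in the partitioned setting we may no longer compose with arbitrary affine maps. Every other step is a line-by-line copy of the proof of \cref{t:fe_iff_invariant}.
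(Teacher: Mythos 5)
Your proposal is correct and follows essentially the same route as the paper: transcribe the proof of \cref{t:fe_iff_invariant}, observing that the projections $Y\times Z\to Y$ and $Y\times Z\to Z$, the shift $(y,z)\mapsto(y,z+c)$, and the map from the trivial partitioned space all decompose blockwise with respect to the product partition $\bigoplus_\nu (Y^{[\nu]}\times Z^{[\nu]})$ and are therefore P-affine. Your extra care in checking that $G=F\circ\pi_Y-\pi_Z$ lies in $\mathcal{F}(Y\times Z,Z)$ via \cref{a:p-affine} is exactly the point the modified assumption is designed to cover.
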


\begin{proof}
  The proof is essentially identical to that of
  \cref{t:fe_iff_invariant}. For the $ ( \Rightarrow ) $ direction, we
  use the fact that the linear projection
  $ A \colon Y \times Z \rightarrow Y $ is P-affine, since it
  decomposes into the projections
  $ A ^{ [\nu] } \colon Y ^{ [\nu] } \times Z ^{ [\nu] } \rightarrow Y
  ^{ [\nu] } $. For the $ ( \Leftarrow ) $ direction, we similarly use
  the fact that $ B \colon Y \times Z \rightarrow Z $ is P-affine,
  since it decomposes into
  $ B ^{ [\nu] } \colon Y ^{ [\nu] } \times Z ^{ [\nu] } \rightarrow Z
  ^{ [\nu] } $. At the final step, we have $ \widetilde{ 0 } = 0 $,
  since the affine map from the trivial Banach space (with trivial
  partitioning) to any point of $Z$ is P-affine.
\end{proof}

\cref{t:chi-related} holds unchanged for the relationship of
partitioned integrators to their modified vector fields. Thus, we
immediately get partitioned-integrator versions of \cref{c:ae} for
P-affine equivariance and \cref{c:fe} for $F$-functional equivariance,
in the sense of \citep[Section~5.2]{McSt2022}.

\subsection{Closure under differentiation and symplecticity}

Finally, we generalize \cref{t:closed_under_differentiation} to
$N$-affine and P-affine equivariant methods, allowing the functional
equivariance results to be extended to observables depending on
variations. \cref{d:closed_under_differentiation} of closure under
differentiation is formally unchanged for partitioned integrator maps;
for additive integrator maps, we modify it in the obvious way, as
follows.

\begin{definition}
  An additive integrator map $\phi$ is \emph{closed under
    differentiation} if
  \begin{equation*}
    \phi ( \delta f ^{ [1] } , \ldots, \delta f ^{ [N] } ) = \delta
    \phi ( f ^{ [1] } , \ldots, f ^{ [N] } ).
  \end{equation*} 
\end{definition}

Compare the following to \citep[Theorem~5.20]{McSt2022}.

\begin{theorem}
  $N$-affine and P-affine integrator maps are closed under
  differentiation.
\end{theorem}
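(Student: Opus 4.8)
The plan is to mimic the proof of \cref{t:closed_under_differentiation}, replacing ``affine'' by ``$N$-affine'' in the additive case and by ``P-affine'' in the partitioned case throughout, and checking that the few auxiliary maps appearing in that argument still satisfy the relevant equivariance hypotheses. Before anything else I would check that closure under differentiation is well-posed in each setting. For an additive method this amounts to $\delta f = \sum_\nu \delta f^{[\nu]}$, which is immediate from $\delta f(y,\eta) = (f(y), f'(y)\eta)$ and linearity of differentiation. For a partitioned method one partitions $Y \times Y = \bigoplus_\nu (Y^{[\nu]} \times Y^{[\nu]})$ and notes that, since each coordinate projection $\pi^{[\nu]} \colon Y \to Y^{[\nu]}$ is linear, the $\nu$-component of $\delta f$ with respect to this partition is exactly $\delta (f^{[\nu]})$. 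Hence $\phi(\delta f^{[1]}, \ldots, \delta f^{[N]})$ and $\delta \phi(f^{[1]}, \ldots, f^{[N]})$ — respectively $\phi(\delta f)$ and $\delta \phi(f)$ — are comparable vector fields on $Y \times Y$.

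With this settled, I would carry out the argument of \cref{t:closed_under_differentiation} step by step. For each $\nu$, consider $f^{[\nu]} \times f^{[\nu]} \in \mathfrak{X}(Y \times Y)$, so that $f \times f = \sum_\nu f^{[\nu]} \times f^{[\nu]}$. Each $f^{[\nu]} \times f^{[\nu]}$ is $A$-related to $f^{[\nu]}$ when $A$ is either projection $(x,y) \mapsto x$ or $(x,y) \mapsto y$; in the partitioned case these projections are P-affine, since blockwise they are the projections $Y^{[\nu]} \times Y^{[\nu]} \to Y^{[\nu]}$. Thus $N$-affine (resp.\ P-affine) equivariance gives $\phi(f^{[1]} \times f^{[1]}, \ldots, f^{[N]} \times f^{[N]}) = \widetilde f \times \widetilde f$, where $\widetilde f = \phi(f^{[1]}, \ldots, f^{[N]})$ (resp.\ $\phi(f \times f) = \widetilde f \times \widetilde f$ with $\widetilde f = \phi(f)$). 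Next, augment by the observable $F(x,y) = (x - y)/\epsilon$, which is affine and, in the partitioned case, P-affine since blockwise it is $(x^{[\nu]}, y^{[\nu]}) \mapsto (x^{[\nu]} - y^{[\nu]})/\epsilon$. The additive (resp.\ partitioned) analogues of \cref{p:ae_implies_afe,p:gtilde_affine} then show that $\phi$ sends the augmented system \eqref{e:fxf_augmented} to the augmented system \eqref{e:fxf_augmented_modified} of $\widetilde f \times \widetilde f$, for every $\epsilon > 0$.

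Finally I would pass to the limit exactly as in \cref{t:closed_under_differentiation}. The substitution $x = y + \epsilon \eta$ is an affine change of coordinates — and, blockwise, a P-affine one — under which \eqref{e:fxf_augmented} and \eqref{e:fxf_augmented_modified} remain $\phi$-related for each $\epsilon$; letting $\epsilon \to 0$, the former converges to the augmented vector field of $\delta f$ (whose $z$-component is $f'(y)\eta$) and the latter to the augmented vector field of $\delta \widetilde f$ (whose $\widetilde z$-component is $\widetilde f'(\widetilde y)\widetilde\eta$). Smoothness, hence continuity, of $\phi$ lets us interchange $\phi$ with this limit, so $\phi$ carries the augmented vector field of $\delta f$ to that of $\delta \widetilde f$; reading off the $(y, \eta)$-components by equivariance under the projection $(y, \eta, z) \mapsto (y, \eta)$ (which is blockwise affine, hence P-affine) yields $\phi(\delta f) = \delta \widetilde f = \delta \phi(f)$. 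In the additive case the same reasoning is run on the $N$-tuple $(\delta f^{[1]}, \ldots, \delta f^{[N]})$ throughout.

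The main obstacle is merely the bookkeeping: one must confirm that the four maps borrowed from the proof of \cref{t:closed_under_differentiation} — the two diagonal projections, the observable $(x,y) \mapsto (x-y)/\epsilon$, and the coordinate change $x = y + \epsilon \eta$ — are P-affine for the product partition $Y \times Y = \bigoplus_\nu (Y^{[\nu]} \times Y^{[\nu]})$, which holds because each splits as a direct sum of affine maps between corresponding blocks. In the additive case there is no such condition to verify; one only needs to carry the decomposition index $\nu$ through each step. The one genuinely analytic ingredient of the original proof, namely the $\epsilon \to 0$ limit together with continuity of $\phi$, transfers unchanged.
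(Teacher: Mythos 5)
Your proposal is correct and follows the same route as the paper: the paper's proof likewise says the argument is that of \cref{t:closed_under_differentiation} verbatim, with the only new content being the choice of additive decomposition of the augmented system (the summand-wise decomposition $\bigl(f^{[\nu]}(x), f^{[\nu]}(y), (f^{[\nu]}(x)-f^{[\nu]}(y))/\epsilon\bigr)$, exactly as you take it) and, in the partitioned case, the product partition $\bigoplus_\nu (Y^{[\nu]}\times Y^{[\nu]}\times Y^{[\nu]})$. Your additional bookkeeping---checking that the projections, the observable $(x,y)\mapsto(x-y)/\epsilon$, and the substitution $x=y+\epsilon\eta$ are all blockwise affine, hence P-affine---is exactly what the paper leaves implicit.
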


\begin{proof}
  The proof is essentially the same as that of
  \cref{t:closed_under_differentiation}. The only modification needed
  is to specify the additive decomposition or partitioning of the
  augmented system \eqref{e:fxf_augmented} to which $\phi$ is applied
  in order to obtain \eqref{e:fxf_augmented_modified}. If $\phi$ is an
  additive integrator then we decompose \eqref{e:fxf_augmented} into
  \begin{equation*}
    f (x) = \sum _{ \nu = 1 } ^N f ^{ [\nu] } (x) ,\qquad f (y) = \sum _{ \nu = 1 } ^N f ^{ [\nu] } (y) , \qquad \frac{ f (x) - f (y) }{ \epsilon } = \sum _{ \nu = 1 } ^N \frac{ f ^{ [\nu] } (x) - f ^{ [\nu] } (y) }{ \epsilon } .
  \end{equation*}
  If $\phi$ is P-affine equivariant, we partition
  $ Y \times Y \times Y = \bigoplus _{ \nu = 1 } ^N ( Y ^{ [\nu] }
  \times Y ^{ [\nu] } \times Y ^{ [\nu] } ) $.
\end{proof}

When $ \omega \colon Y \times Y \rightarrow Z $ is a continuous
bilinear map on $Y$, it follows that \cref{e:symplectic} extends
\emph{mutatis mutandis} to $N$-affine and P-affine equivariant
integrator maps---and in particular, those preserving quadratic
invariants are symplectic.

For NB-series, a similar argument to \cref{t:qfe_b-series} gives the
quadratic functional equivariance condition
$ b ( u \circ v ) + b ( v \circ u ) = 0 $ for all $N$-colored trees
$u$ and $v$. Together with the affine invariant preservation condition
that $ b (\tau) $ is independent of the color of the root
(\cref{e:nb_affine}), we recover a modified-vector-field version of
\citet[Theorem~3]{ArMuSa1997}, which states that these maps must
therefore correspond to ordinary symplectic B-series.

On the other hand, if $\omega$ is at most bilinear with respect to a
partition $ Y = \bigoplus _{ \nu = 1 } ^N Y ^{ [\nu] } $ (i.e., the
$ Y ^{ [\nu] } \times Y ^{ [\nu] } $ blocks are trivial), then
$ F ^{ \prime \prime } \bigl( u (f) , v (f) \bigr) = 0 $ when $u$ and
$v$ have the same colored root. Hence, the condition
$ b ( u \circ v ) + b ( v \circ u ) = 0 $ need only hold for trees
with different-colored roots. In particular, when $\omega$ is the
canonical symplectic form on $ Y = E \oplus E ^\ast $, applying this
with $ N = 2 $ recovers \citet[Theorem~IX.10.4]{HaLuWa2006} on modified
vector fields of symplectic P-series.

\appendix

\section{Necessity proof for quadratic functionally equivariant B-series}
\label{s:necessity}

In this appendix, we prove that for all $ u, v \in T $, it is possible
to construct a vector field $f$ and quadratic $F$ such that
$ F ^{ \prime \prime } \bigl( u (f) , v (f) \bigr) = F ^{ \prime
  \prime } \bigl( v (f) , u (f) \bigr) $ is the only nonvanishing
Hessian term at some point (i.e., $ F ^{ \prime \prime } $ vanishes on
all other pairs of trees). This completes the proof of
\cref{t:qfe_b-series} by establishing the necessity of the condition
$ b ( u \circ v ) + b ( v \circ u ) = 0 $ for quadratic functional
equivariance of a B-series integrator map.

We begin with a vector field construction for an individual tree
$\tau$, which we subsequently apply to $u$ and $v$ to prove the claim
above. Let $ \lvert \tau \rvert $ denote the order of $\tau$, i.e.,
its number of vertices.

\begin{lemma}
  \label{l:tauf}
  Given $ \tau \in T $, there exists a vector field $f$ on
  $ \mathbb{R} ^{ \lvert \tau \rvert } $ such that
  \begin{equation*}
    \theta (f) _{\lvert \tau \rvert} (0) =
    \begin{cases}
      \sigma (\tau) ,& \text{if } \theta = \tau ,\\
      0, & \text{otherwise}.
    \end{cases}
  \end{equation*}
\end{lemma}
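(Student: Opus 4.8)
The plan is to construct $f$ explicitly as a polynomial ``monomial vector field'' read off from the combinatorial structure of $\tau$, and then verify the stated values of the elementary differentials by induction over subtrees. First I would fix any bijective labeling of the vertices of $\tau$ by $\{1,\dots,n\}$, where $n=\lvert\tau\rvert$, subject only to the root receiving the label $n$. For a vertex $v$ with label $i$, let $C_i\subseteq\{1,\dots,n\}$ be the set of labels of the children of $v$, and define the $i$th component of $f$ on $\mathbb{R}^n$ by $f^i(y)=\prod_{c\in C_i}y^c$, with the empty product equal to $1$ when $v$ is a leaf. Each $f^i$ is a squarefree monomial (and $f$ is polynomial, hence smooth); note that $y^c$ occurs in $f^i$ precisely when $i$ is the parent of $c$, so no $f^i$ involves $y^i$.

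The heart of the argument is the following claim, which I would prove by induction on $\lvert\theta\rvert$: for every vertex $v$ of $\tau$, with label $i$ and subtree $\tau_v$, and every rooted tree $\theta$, one has $\theta(f)^i(0)=\sigma(\theta)$ if $\theta\cong\tau_v$ and $\theta(f)^i(0)=0$ otherwise; the lemma is then the case $v=\text{root}$, $i=n$. The base case $\theta=\Bseries{[]}$ is $f^i(0)$, which equals $1=\sigma(\Bseries{[]})$ exactly when $v$ is a leaf, and $0$ otherwise. For the inductive step, write $\theta=[\theta_1,\dots,\theta_p]$ and expand
\[
  \theta(f)^i(0)=\sum_{j_1,\dots,j_p}\bigl(\partial_{j_1}\cdots\partial_{j_p}f^i\bigr)(0)\,\prod_{q=1}^p\theta_q(f)^{j_q}(0).
\]
Since $f^i$ is squarefree in the variables $\{y^c:c\in C_i\}$, the factor $(\partial_{j_1}\cdots\partial_{j_p}f^i)(0)$ is nonzero---and then equal to $1$---exactly when $(j_1,\dots,j_p)$ is a bijection onto $C_i$; in particular $v$ must have exactly $p$ children. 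Using the inductive hypothesis for each $\theta_q(f)^{j_q}(0)$ (here $j_q$ is the label of some child $v'$ of $v$, and $\lvert\theta_q\rvert<\lvert\theta\rvert$), the surviving terms correspond bijectively to the ways of matching $\theta_1,\dots,\theta_p$ to the children subtrees of $v$ so that matched subtrees are isomorphic, and each such term contributes $\prod_q\sigma(\theta_q)$. When $\theta\cong\tau_v$ the number of such matchings is $\mu_1!\cdots\mu_k!$, where the $\mu_j$ count the multiplicities of the distinct children subtree types, so $\theta(f)^i(0)=\bigl(\prod_q\sigma(\theta_q)\bigr)\mu_1!\cdots\mu_k!=\sigma(\tau_v)$ by the recursive formula for $\sigma$ recalled above; when $\theta\not\cong\tau_v$ there are no such matchings (or $v$ has the wrong number of children), so $\theta(f)^i(0)=0$.

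The only real obstacle is bookkeeping: correctly identifying the nonvanishing terms of the multi-index sum with rooted-tree isomorphisms, and tracking the symmetry factors when several children of a vertex carry isomorphic subtrees. Nothing analytic is at stake, since $f$ is polynomial and everything is evaluated at the single point $0$. One point I would state explicitly is that the ``otherwise'' case covers trees $\theta$ of every order, not merely of order $n$: this is automatic from the induction, because a mismatch in the number of children at any vertex---or evaluating a nonconstant squarefree monomial at $0$---forces the relevant differential to vanish. Applying the lemma to $u$ and $v$ on a product space, as in the construction that follows, will then isolate the single Hessian term $F^{\prime\prime}\bigl(u(f),v(f)\bigr)$, completing the necessity direction of \cref{t:qfe_b-series}.
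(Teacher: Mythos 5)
Your proposal is correct and follows essentially the same route as the paper: the identical monomial vector field $f_i(y)=y_{j_1}\cdots y_{j_k}$ read off from the parent--child structure, the same identification of the nonvanishing terms of $f^{(m)}(0)$ with bijections onto the children (a sum over permutations), and the same count $\mu_1!\cdots\mu_k!$ recovering $\sigma(\tau)$. The only (cosmetic) difference is that you induct on $\lvert\theta\rvert$ with a claim stated at every vertex, whereas the paper inducts on the height of $\tau$.
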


\begin{proof}
  Label the vertices of $\tau$ by $ 1 , \ldots, \lvert \tau \rvert $,
  where $ i = \lvert \tau \rvert $ is the root. For each vertex $i$ with
  children $ j _1 , \ldots , j _k $, define the $i$th component of $f$
  at
  $ y = ( y _1 , \ldots , y _{ \lvert \tau \rvert } ) \in \mathbb{R}
  ^{ \lvert \tau \rvert } $ to be
  \begin{equation*}
    f _i (y) = y _{ j _1 } \cdots y _{ j _k }.
  \end{equation*}
  In the case $ k = 0 $ (i.e., vertex $i$ is a leaf), we use the
  convention that the empty product is $1$. We claim that this $f$ has
  the desired property.

  The proof of this claim is by induction on the height of $\tau$. The
  base case $ \tau = \Bseries{[]} $ is trivial. For the induction
  step, write $ \tau = [ \tau _1 , \ldots, \tau _m ] $, and suppose
  without loss of generality that the children of the root are
  labeled $ 1 ,\ldots , m $ accordingly. By definition,
  \begin{equation*}
    \tau (f) = f ^{ (m) } \bigl( \tau _1 (f) , \ldots, \tau _m (f) \bigr) .
  \end{equation*}
  Using the inductive assumption, we have
  \begin{equation}
    \label{e:tauj}
    \tau _j (f) _i (0) =
    \begin{cases}
      \sigma ( \tau _j ) ,& \text{if } \tau _i = \tau _j ,\\
      0 ,& \text{otherwise},
    \end{cases}
  \end{equation}
  where $i$ and $j$ range over $ 1, \ldots, m $. Next, since
  $ f _{ \lvert \tau \rvert } (y) = y _1 \cdots y _m $, we have
  \begin{equation}
    \label{e:fm}
    f _{ \lvert \tau \rvert } ^{ (m) } (0) = \sum _{ \pi \in S _m } \mathrm{d} y _{ \pi (1) } \otimes \cdots \otimes \mathrm{d} y _{ \pi (m) } ,
  \end{equation}
  where $ S _m $ is the symmetric group on $m$ elements, so that $\pi$
  is a permutation of $ \{ 1, \ldots, m \} $. Applying this to
  \eqref{e:tauj}, each nonvanishing term evaluates to
  $ \sigma ( \tau _1 ) \cdots \sigma ( \tau _m ) $, and the
  nonvanishing terms correspond to $\pi$ that permute identical trees
  among $ \tau _1 , \ldots, \tau _m $. The number of such permutations
  is precisely $ \mu _1 ! \cdots \mu _k ! $, where the $ \mu _j $
  count the occurrences of each unique tree, as in the proof of
  \cref{t:qfe_b-series}. Therefore,
  \begin{equation*}
    \tau (f) _{ \lvert \tau \rvert } (0) = \sigma ( \tau _1 ) \cdots \sigma ( \tau _m ) \mu _1 ! \cdots \mu _k ! = \sigma (\tau) .
  \end{equation*}
  Finally, suppose $ \theta \neq \tau $, and write
  $ \theta = [ \theta _1 , \ldots, \theta _n ] $. If $ n \neq m $,
  then we have $ f ^{(n)} _{ \lvert \tau \rvert } (0) = 0 $, so
  $ \theta (f) _{ \lvert \tau \rvert } (0) = 0 $. Otherwise,
  $ [ \theta _1, \ldots, \theta _m ] $ is not a permutation of
  $ [ \tau _1 , \ldots, \tau _m ] $, so for all $ \pi \in S _m $,
  there exists some $j$ such that
  $ \theta _j \neq \tau _{ \pi (j) } $. In this case, \eqref{e:tauj}
  implies $ \theta _j (f) _{ \pi (j) } (0) = 0 $, so every term in
  \eqref{e:fm} vanishes when evaluated on
  $ \bigl( \theta _1 (f) (0) , \ldots, \theta _m (f) (0) \bigr) $, and
  again $ \theta (f) _{ \lvert \tau \rvert } (0) = 0 $.
\end{proof}

\begin{lemma}
  Given $ u , v \in T $, there exists a vector field $f$ and quadratic
  functional $F$ on
  $ Y =\mathbb{R} ^{ \lvert u \rvert + \lvert v \rvert } $ such that
  \begin{equation*}
    F ^{ \prime \prime } \bigl( \tau (f), \theta  (f) \bigr) (0) 
    \begin{cases}
      \neq 0, & \text{if $ ( \tau, \theta ) = ( u, v ) $ or $ (\tau, \theta) = ( v, u ) $},\\
      = 0, & \text{otherwise}.
    \end{cases}
  \end{equation*}
\end{lemma}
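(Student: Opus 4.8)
The plan is to build the required $f$ and $F$ by combining two instances of the single-tree construction in \cref{l:tauf}, one for $u$ and one for $v$, placed on orthogonal blocks of $Y = \mathbb{R}^{\lvert u \rvert} \oplus \mathbb{R}^{\lvert v \rvert}$, and to take $F$ to be the product of the two ``root coordinates.'' Concretely, I would apply \cref{l:tauf} to $u$ to get a vector field $f_u$ on $\mathbb{R}^{\lvert u \rvert}$, and to $v$ to get $f_v$ on $\mathbb{R}^{\lvert v \rvert}$, and then set $f(y_u, y_v) = \bigl( f_u (y_u), f_v (y_v) \bigr)$ on $Y$.

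The structural fact driving the argument is that such a ``decoupled'' vector field has decoupled elementary differentials: since each component of $f$ depends only on the coordinates of its own block, all mixed partial derivatives of $f$ vanish, and an easy induction on the structure of the tree shows that $\theta (f) (y_u, y_v) = \bigl( \theta (f_u) (y_u), \theta (f_v) (y_v) \bigr)$ for every $\theta \in T$. Combining this with \cref{l:tauf}, the $\lvert u \rvert$-th coordinate of $\theta (f) (0)$ equals $\sigma (u)$ if $\theta = u$ and $0$ otherwise, while the last coordinate of $\theta (f) (0)$ (which is the root coordinate of the $\mathbb{R}^{\lvert v \rvert}$ block, in position $\lvert u \rvert + \lvert v \rvert$) equals $\sigma (v)$ if $\theta = v$ and $0$ otherwise.

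Next I would take the quadratic functional $F (y) = y _{ \lvert u \rvert } \, y _{ \lvert u \rvert + \lvert v \rvert }$, the product of the two root coordinates; this satisfies $F ^{ \prime \prime \prime } = 0$ and has constant Hessian $F ^{ \prime \prime } (a, b) = a _{ \lvert u \rvert } b _{ \lvert u \rvert + \lvert v \rvert } + a _{ \lvert u \rvert + \lvert v \rvert } b _{ \lvert u \rvert }$. Substituting the coordinate values from the previous paragraph, $F ^{ \prime \prime } \bigl( \tau (f) (0), \theta (f) (0) \bigr)$ equals $\sigma (u) \sigma (v)$ precisely when $( \tau, \theta ) = ( u, v )$ or $( v, u )$ — equal to $2 \sigma (u) ^2$ in the degenerate case $u = v$, where those two conditions coincide — and vanishes for every other pair of trees; since $\sigma (u), \sigma (v) \geq 1$, these values are genuinely nonzero. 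As $F$ is quadratic, $F ^{ \prime \prime } \bigl( \tau (f), \theta (f) \bigr) (0) = F ^{ \prime \prime } \bigl( \tau (f) (0), \theta (f) (0) \bigr)$, which is exactly the asserted dichotomy. The only step that is not pure bookkeeping is the decoupling of elementary differentials for $f = f_u \oplus f_v$, and that is a short induction on the tree; with it in place the lemma follows, completing the proof of \cref{t:qfe_b-series}.
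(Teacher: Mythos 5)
Your proposal is correct and is essentially the paper's own argument: the paper likewise places the \cref{l:tauf} construction for $u$ and $v$ on disjoint coordinate blocks of $\mathbb{R}^{\lvert u \rvert + \lvert v \rvert}$ and takes $F(y) = y_{\lvert u \rvert}\, y_{\lvert u \rvert + \lvert v \rvert}$, obtaining the same values $\sigma(u)\sigma(v)$ (or $2\sigma(u)\sigma(v)$ when $u = v$). Your explicit remark that elementary differentials of a decoupled vector field decouple block-by-block is left implicit in the paper but is a worthwhile clarification.
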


\begin{proof}
  Similarly to \cref{l:tauf}, label the vertices of $u$ by
  $ 1, \ldots, \lvert u \rvert $, where $ i = \lvert u \rvert $ is the
  root. For each vertex $i$ with children $ j _1 , \ldots, j _k $, let
  \begin{equation*}
    f _i (y) = y _{ j _1 } \cdots y _{ j _k } .
  \end{equation*}
  Repeat this for $v$, labeling its vertices by
  $ \lvert u \rvert + 1 , \ldots, \lvert u \rvert + \lvert v \rvert $,
  where $ i = \lvert u \rvert + \lvert v \rvert $ is the root. Now,
  define the quadratic functional
  \begin{equation*}
    F (y) = y _{ \lvert u \rvert } y _{ \lvert u \rvert + \lvert v \rvert } ,
  \end{equation*}
  so that
  \begin{equation*}
    F ^{ \prime \prime } \bigl( \tau (f) , \theta (f) \bigr) =  \tau (f) _{ \lvert u \rvert } \theta (f) _{ \lvert u \rvert + \lvert v \rvert }  +  \tau (f) _{ \lvert u \rvert + \lvert v \rvert } \theta (f) _{ \lvert u \rvert }.
  \end{equation*}
  If $\tau = u $ and $ \theta = v $, or vice versa, then \cref{l:tauf}
  implies that evaluating this at $ y = 0 $ gives
  $ 2 \sigma ( u ) \sigma (v) $ if $ u = v $ and
  $ \sigma ( u ) \sigma (v) $ if $ u \neq v $. If $\tau$ and $\theta$
  are not $u$ and $v$, then \cref{l:tauf} implies that both terms
  vanish at $ y = 0 $.
\end{proof}

\footnotesize

\end{document}